\title{Computing the Mazur and Swinnerton-Dyer critical subgroup of elliptic curves}
\author{Hao Chen}
\address{Department of Mathematics\\
  University of Washington\\
  Seattle, Washington 98105}
\email{chenh123@uw.edu}
\DeclareMathOperator{\rank}{rank}
\DeclareMathOperator{\crit}{crit}
\DeclareMathOperator{\tr}{tr}
\begin{document}

\begin{abstract}
Let $E$ be an optimal elliptic curve defined over $\bQ$. The {\em critical subgroup} of $E$ is defined by Mazur and Swinnerton-Dyer as the subgroup of $E(\bQ)$ generated by traces of branch points under a modular parametrization of $E$. We prove that for all rank two elliptic curves with conductor smaller than 1000, the critical subgroup is torsion. First, we define a family of {\em critical polynomials} attached to $E$ and describe two algorithms to compute such polynomials. We then give a sufficient condition for the critical subgroup to be torsion in terms of the factorization of critical polynomials. Finally, a table of critical polynomials is obtained for all elliptic curves of rank two and conductor smaller than 1000, from which we deduce our result.
\end{abstract}

\maketitle

\section{Introduction}
\label{sec: intro}

\subsection{Preliminaries}
Let $E$ be an elliptic curve over $\bQ$ and let $L(E,s)$ be the  $L$-function of $E$. The rank part of the Birch and Swinnerton-Dyer (BSD) conjecture states that
\[
	\rank(E(\bQ)) = \ord_{s=1}L(E,s).
\]
The right hand side is called the {\em analytic rank of $E$}, and is denoted by $r_{\an}(E)$. The left hand side is called the {\em algebraic rank of $E$}. The rank part of the BSD conjecture is still open when $r_{\an}(E) > 1$, and its proof for $r_{\an}(E) = 1$ uses the {\em Gross-Zagier formula}, which relates the value of certain $L$-functions to heights of Heegner points.

Let $N$ be the conductor of $E$. The modular curve $X_0(N)$
is a nonsingular  projective curve defined over $\bQ$. Since $E$ is modular(Breuil, Conrad, Diamond, and Taylor \cite{breuil2001modularity}), there is a surjective morphism $\varphi:X_0(N) \to E$ defined over $\bQ$. Let $\omega_E$ be the invariant differential on $E$ and let $\omega =  \varphi^*(\omega_E)$. Then $\omega$ is a holomorphic differential on $X_0(N)$ and we have $\omega  = c f(z) dz$, where $f$ is the normalized newform attached to $E$ and
$c$ is a nonzero constant. In the rest of the paper, we fix the following notations: the elliptic curve $E$, the conductor $N$, the morphism $\varphi$, and the differential $\omega$.

Let $R_\varphi =  \sum_{[z] \in X_0(N)} (e_\varphi(z)  -1) [z]$ be the ramification divisor of $\varphi$.

\begin{Def}[Mazur and Swinnerton-Dyer \cite{mazur-sd}]
The {\em critical subgroup} of $E$ is
\[
	E_{\crit}(\bQ)  = \langle \tr(\varphi([z])) : [z] \in \supp R_\varphi \rangle,
\]
where $\tr(P) = \sum_{\sigma: \bQ(P) \to \bar{\bQ}} P^{\sigma}$.
\end{Def}

Since the divisor $R_\varphi$ is defined over $\bQ$, every point $[z]$ in its
support is in $X_0(N)(\overline{\bQ})$, hence $\varphi([z]) \in E(\overline{\bQ})$, justifying the trace operation. The group $E_{\crit}(\bQ)$ is a subgroup of $E(\bQ)$. Observe that $R_\varphi = \Div(\omega)$, thus $\deg R_{\varphi} = 2g(X_0(N))-2$. In the rest of the paper, we use the notation $\Div(\omega)$ in place of the ramification divisor $R_\varphi$. In addition, we will assume $E$ is an optimal elliptic curve, so $\varphi$ is unique up to sign. This justifies the absence of $\varphi$ in the notation $E_{\crit}(\bQ)$.

Recall the construction of {\em Heegner points}: for an imaginary quadratic order $\cO = \cO_d$ of discriminant $d < 0$,
let $H_d(x)$ denote its {\em Hilbert class polynomial}.

\begin{Def}
A point $[z] \in X_0(N)$ is a {\em ``generalized Heegner point''} if there exists a negative discriminant $d$ s.t.
$H_d(j(z)) = H_d(j(Nz)) = 0$.
If in addition we have $(d,2N) = 1$, then $[z]$ is a {\em Heegner point}.
\end{Def}

For any discriminant $d$, let $E_d$ denote the quadratic twist of $E$ by $d$. Then the Gross-Zagier formula in \cite{gross1986heegner} together with a non-vanishing theorem for $L(E_d,1)$(see, for example, Bump, Friedberg, and Hoffstein \cite{bump1990nonvanishing}) implies the following
\begin{theorem}
\label{thm0}
(1) If $r_{\an}(E) = 1$, then there exists a Heegner point $[z]$ on $X_0(N)$ such that  $\tr(\varphi([z]))$ has infinite order in $E(\bQ)$. \\
(2) If $r_{\an}(E) \geq 2$, then $\tr(\varphi([z])) \in E(\bQ)_{\tors}$ for every  ``generalized Heegner point'' $[z]$ on $X_0(N)$.
\end{theorem}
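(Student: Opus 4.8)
The plan is to deduce both statements from the Gross--Zagier formula applied over a suitable imaginary quadratic field $K=\bQ(\sqrt d)$, exploiting the factorization $L(E/K,s)=L(E,s)\,L(E_d,s)$. Recall that if $[z]$ is a CM point on $X_0(N)$ by an order $\cO_d$, then $[z]$ is defined over the ring class field $H$ of $\cO_d$; this field is abelian over $K$, and $\mathrm{Gal}(H/\bQ)$ contains complex conjugation $\tau$. Hence
\[
\tr(\varphi([z])) \;=\; \sum_{\sigma\in\mathrm{Gal}(H/\bQ)}\varphi([z])^{\sigma} \;=\; \tr_{K/\bQ}(y_K),\qquad y_K:=\tr_{H/K}(\varphi([z]))\in E(K),
\]
(up to a nonzero integer factor in the degenerate case $\bQ([z])\subsetneq H$). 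So it suffices to understand the canonical height $\widehat h(y_K)$ and then descend from $E(K)$ to $E(\bQ)$.

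For part (1), since $r_{\an}(E)=1$ the sign of the functional equation of $L(E,s)$ is $-1$; correspondingly, for a negative fundamental discriminant $d$ prime to $2N$ all of whose prime factors split in $K$ (the Heegner hypothesis), the twist $L(E_d,s)$ has sign $+1$. By the non-vanishing theorem for quadratic twists \cite{bump1990nonvanishing}, applied within the congruence class modulo $4N$ that cuts out the Heegner hypothesis, we may choose such a $d$ with $L(E_d,1)\neq 0$; let $[z]$ be a Heegner point of discriminant $d$. Differentiating $L(E/K,s)=L(E,s)L(E_d,s)$ at $s=1$ and using $L(E,1)=0$ gives $L'(E/K,1)=L'(E,1)\,L(E_d,1)\neq 0$ because $L'(E,1)\neq 0$. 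The Gross--Zagier formula \cite{gross1986heegner} then gives $\widehat h(y_K)=c\,L'(E/K,1)\neq 0$ for an explicit nonzero constant $c$, so $y_K$ has infinite order. Finally, the known action of $\tau$ on Heegner points \cite{gross1986heegner}, together with sign $-1$, shows $\overline{y_K}\equiv y_K\pmod{E(K)_{\tors}}$, so $\tr(\varphi([z]))=\tr_{K/\bQ}(y_K)\equiv 2y_K$ modulo torsion has infinite order in $E(\bQ)$.

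For part (2), let $[z]$ be any generalized Heegner point, so $H_d(j(z))=H_d(j(Nz))=0$ for some negative discriminant $d$; set $K=\bQ(\sqrt d)$ and $y_K=\tr_{H/K}(\varphi([z]))\in E(K)$ as above. By the Gross--Zagier formula in the form valid for arbitrary CM points (non-maximal orders, no Heegner hypothesis), $\widehat h(y_K)=c\,L'(E/K,1)$ with $c\neq 0$. Differentiating $L(E/K,s)=L(E,s)L(E_d,s)$ and using that $r_{\an}(E)\geq 2$ forces $L(E,1)=L'(E,1)=0$, we obtain $L'(E/K,1)=0$, hence $\widehat h(y_K)=0$ and $y_K\in E(K)_{\tors}$. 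Therefore $\tr(\varphi([z]))=\tr_{K/\bQ}(y_K)\in E(\bQ)_{\tors}$, as claimed.

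The main obstacle is invoking the Gross--Zagier formula in the generality required by part (2): the original formula of \cite{gross1986heegner} assumes a maximal order and the Heegner hypothesis, whereas a generalized Heegner point only asks that $j(z)$ and $j(Nz)$ have CM by the same, possibly non-maximal, order. One must either appeal to the general Gross--Zagier formula or reduce the arbitrary CM case to the classical one. A secondary technical point, needed only for part (1), is pinning down the sign of $\tau$ acting on $y_K$, i.e.\ identifying the Atkin--Lehner eigenvalue of the newform with the sign of the functional equation; this is standard but should be stated carefully.
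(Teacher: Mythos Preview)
The paper does not actually prove Theorem~\ref{thm0}; it merely states, in the sentence preceding the theorem, that it is a consequence of the Gross--Zagier formula \cite{gross1986heegner} together with a non-vanishing result for $L(E_d,1)$ \cite{bump1990nonvanishing}. Your proposal follows exactly this route and supplies the standard details (factorization $L(E/K,s)=L(E,s)L(E_d,s)$, choice of $d$ via non-vanishing, descent via complex conjugation), and you are right to flag that part~(2) requires a Gross--Zagier formula more general than the one in \cite{gross1986heegner}, since a ``generalized Heegner point'' need not satisfy the classical Heegner hypothesis nor have CM by a maximal order---a point the paper's citation glosses over.
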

The first case in the above theorem is essential to the proof of rank BSD conjecture for $r_{\an}(E) = 1$.

Observe that the defining generators of the critical subgroup also take the form $\tr(\varphi([z]))$. Then a natural question is:

\begin{question}
Does there exist an elliptic curve $E/\bQ$ with $r_{\an}(E) \geq 2$ and $\rank(E_{\crit}(\bQ)) >0$?
\end{question}
We will show that the answer is negative
for all elliptic curves with conductor $N <1000$, using {\em critical polynomials} attached to  elliptic curves.

\subsection{Main results}
\label{sec:form of result}
Let $E, N, \varphi$, and $\omega$ be as defined previously, and write $\Div(\omega) = \sum_{[z] \in X_0(N)} n_z[z]$. Let $j$ denote the $j$-invariant function.
\begin{Def}
The {\em critical j-polynomial} of $E$ is
\[
	F_{E,j}(x) = \prod_{z \in \supp \Div(\omega), j(z) \neq \infty}(x-j(z))^{n_z}.
\]
\end{Def}
Since $\Div(\omega)$ is defined over $\bQ$ and has degree $2g(X_0(N))-2$, we have $F_{E,j}(x) \in \bQ[x]$ and $\deg F_{E,j} \leq 2g(X_0(N))-2$, where equality holds if $\Div(\omega)$ does not contain cusps. For any non-constant modular function $h \in \bQ(X_0(N))$, the {\em critical $h$-polynomial} of $E$ is defined similarly, by replacing $j$ with $h$.

In this paper we give two algorithms {\em Poly Relation} and {\em Poly Relation-YP} to
compute critical polynomials. The algorithm {\em Poly Relation} computes the critical $j$-polynomial $F_{E,j}$, and the algorithm {\em Poly Relation} computes the critical $h$-polynomial $F_{E,h}$ for some modular function $h$, chosen within the algorithm.

We then relate the critical polynomials to the critical subgroup via the following theorem. Recall that 
$H_d(x)$ denotes the Hilbert class polynomial associated to a negative discriminant $d$.
\begin{theorem}
\label{thm1}
Suppose $r_{\an}(E) \geq 2$, and assume at least one of the following holds: \\
(1) $F_{E,h}$ is irreducible for some non-constant function $h \in \bQ(X_0(N))$. \\
(2) There exists negative discriminants $D_k$ and positive integers $s_k$ for $1 \leq k \leq m$, satisfying
$\bQ(\sqrt{D_{k}}) \neq \bQ(\sqrt{D_{k'}})$ for all $k \neq k'$, and an irreducible polynomial $F_0 \in \bQ[x]$, such that 
\[
F_{E,j} = \prod_{k =1}^m H_{D_{k}}^{s_k}\cdot F_0.
\]

Then $\rank(E_{\crit}(\bQ))  = 0$.
\end{theorem}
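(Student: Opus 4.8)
The plan is to reduce both cases to a single principle: each generator $\tr(\varphi([z]))$ of $E_{\crit}(\bQ)$ attached to a ramification point $[z]$ lies in $E(\bQ)$, and by Theorem~\ref{thm0}(2) it will automatically be torsion provided $[z]$ can be shown to be a ``generalized Heegner point''. So the crux is: use the factorization hypothesis on the critical polynomial to force every branch point into the generalized-Heegner locus, i.e.\ to lie on an irreducible component of $\Div(\omega)$ whose $j$-values are roots of a single Hilbert class polynomial $H_D$.

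\medskip
\noindent\textbf{Setup and case (2).} First I would fix notation: the Galois group $G_\bQ$ permutes the points $[z]$ in $\supp\Div(\omega)$, and $j(z)$ ranges over the roots of $F_{E,j}$, compatibly with this action since $\Div(\omega)$ is $\bQ$-rational and $j\in\bQ(X_0(N))$. Assume hypothesis (2), so $F_{E,j}=\prod_k H_{D_k}^{s_k}\cdot F_0$ with $F_0$ irreducible and the $\bQ(\sqrt{D_k})$ pairwise distinct. I claim each branch point $[z]$ is a generalized Heegner point. Indeed, if $j(z)$ is a root of some $H_{D_k}$, then $j(z)$ is the $j$-invariant of an elliptic curve with CM by $\cO_{D_k}$; one must then check $H_{D_k}(j(Nz))=0$ as well. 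This is where the pairwise-distinct-fields condition enters: the $j$-values of the component $X_0(N)\supset$ (component corresponding to $H_{D_k}$) come in Atkin--Lehner/Hecke-compatible pairs $(j(z),j(Nz))$, and since these form a single Galois orbit lying over the roots of $H_{D_k}$ (which, because the $D_k$ generate distinct quadratic fields, cannot mix with roots of the other factors), both $j(z)$ and $j(Nz)$ must be roots of $H_{D_k}$. Hence $[z]$ is a generalized Heegner point with discriminant $D_k$, and Theorem~\ref{thm0}(2) gives $\tr(\varphi([z]))\in E(\bQ)_{\tors}$. Since the critical subgroup is generated by these traces, $\rank(E_{\crit}(\bQ))=0$. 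The remaining branch points --- those with $j(z)$ a root of $F_0$ --- require a separate argument, which is handled by case (1) applied to the appropriate component (see below), or else one observes that if $F_0$ is a genuine factor then $\Div(\omega)$ restricted to the corresponding locus is irreducible over $\bQ$ and one repeats the case-(1) argument there; alternatively the cleanest route is: a single Galois orbit of branch points either lies entirely over roots of one $H_{D_k}$ (done) or entirely over roots of $F_0$, and in the latter case we invoke case (1).

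\medskip
\noindent\textbf{Case (1).} Suppose $F_{E,h}$ is irreducible for some non-constant $h\in\bQ(X_0(N))$. Then the set $\{h(z):[z]\in\supp\Div(\omega),\ h(z)\neq\infty\}$ is a single $G_\bQ$-orbit (of size $\deg F_{E,h}$, counted without multiplicity), and correspondingly the branch points $[z]$ themselves --- possibly together with finitely many with $h(z)=\infty$ --- form one Galois orbit, so that all the traces $\tr(\varphi([z]))$ are equal to a single point $P\in E(\bQ)$, up to contributions from the poles of $h$ which must be analyzed. Thus $E_{\crit}(\bQ)$ is generated by $P$ (and the finitely many exceptional traces). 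Now I would pick one such branch point $[z_0]$ and produce a negative discriminant $d$ with $H_d(j(z_0))=H_d(j(Nz_0))=0$, showing $[z_0]$ is a generalized Heegner point: by Theorem~\ref{thm0}(2), $P=\tr(\varphi([z_0]))$ is torsion. The existence of such $d$ for at least one branch point --- equivalently, that $X_0(N)$ always carries a generalized Heegner point among its $\varphi$-ramification points --- is exactly the delicate input; the intended argument is presumably that the $j$-line and the modular interpretation force a CM point to appear, or that irreducibility of $F_{E,h}$ combined with the constraint $\deg F_{E,h}\le 2g-2$ and known locations of branch points pins down the orbit.

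\medskip
\noindent\textbf{Main obstacle.} The hard part is the passage from a factorization statement about $F_{E,j}$ (or irreducibility of $F_{E,h}$) to the assertion that the underlying branch points $[z]$ are \emph{generalized Heegner points}, i.e.\ satisfy \emph{both} $H_d(j(z))=0$ and $H_d(j(Nz))=0$ for a common $d$. Knowing $j(z)$ is a CM $j$-invariant is not by itself enough; one needs the Fricke/Atkin--Lehner involution $w_N$ on $X_0(N)$ (which swaps $[z]\leftrightarrow[Nz]$ suitably) to respect the CM discriminant, and this is where the hypothesis that the fields $\bQ(\sqrt{D_k})$ are pairwise distinct does the real work, preventing a Galois orbit from straddling two different Hilbert class polynomials. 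I expect the proof to formalize exactly this: $\Div(\omega)$ is $w_N$-stable (since $\omega$ corresponds to a newform, an eigenform for $w_N$), so $[z]\mapsto[Nz]$ permutes branch points; combined with $G_\bQ$-equivariance and the separation-of-fields hypothesis, the pair $(j(z),j(Nz))$ is trapped among roots of a single $H_{D_k}$, yielding the generalized-Heegner property and hence, via Theorem~\ref{thm0}(2), torsionness of every generator of $E_{\crit}(\bQ)$.
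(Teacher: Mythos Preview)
Your handling of the Hilbert-class-polynomial factors in case~(2) matches the paper's: $w_N$-invariance of $\Div(\omega)$ puts $j(Nz)$ among the roots of $F_{E,j}$, and the distinct-fields hypothesis then forces $j(z)$ and $j(Nz)$ to be roots of the \emph{same} $H_{D_k}$, so $[z]$ is a generalized Heegner point and Theorem~\ref{thm0}(2) applies.

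The gap is in case~(1), and by extension in your treatment of the $F_0$-orbit in case~(2). You propose to find a discriminant $d$ with $H_d(j(z_0))=H_d(j(Nz_0))=0$ for a branch point $[z_0]$ in the irreducible orbit, but no such $d$ exists in general: when $F_{E,h}$ is irreducible and is not a Hilbert class polynomial (e.g.\ $E=\mathbf{433a}$, where $F_{E,j}$ is irreducible of degree $68$), the branch points are \emph{not} CM and Theorem~\ref{thm0}(2) says nothing about them directly. The paper closes this by a different mechanism. It introduces $p_{\Div(\omega)}=\sum_z n_z\,\varphi([z])\in E(\bQ)$ and shows (Lemma~\ref{lem: ell}, using that $\Div(\omega)-\Div(dj)$ is principal) that $6\,p_{\Div(\omega)}$ is, modulo torsion, a sum of images of the \emph{elliptic points} of $X_0(N)$, i.e.\ points with $j\in\{0,1728\}$, which \emph{are} CM. Since $r_{\an}(E)\ge 2$, those images are torsion, hence $p_{\Div(\omega)}$ is torsion. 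This gives one nontrivial linear relation among the $n_\omega$ generators of $E_{\crit}(\bQ)$, so $\rank E_{\crit}(\bQ)\le n_\omega-1$ (Proposition~\ref{prop: irr}). In case~(1) irreducibility forces $n_\omega=1$ and we are done; in case~(2) the Heegner argument disposes of all generators except the single $F_0$-orbit, and the relation coming from $p_{\Div(\omega)}$ kills that remaining one.
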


Combining Theorem~\ref{thm1} with our computation of critical polynomials, we verified
\begin{corollary}
\label{cor2}
For all elliptic curves $E$ of rank 2 and conductor $N <1000$, the rank of $E_{\crit}(\bQ)$ is zero.
\end{corollary}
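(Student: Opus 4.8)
The plan is to reduce Corollary~\ref{cor2} to a finite computation governed by Theorem~\ref{thm1}. First I would assemble, from Cremona's tables, the complete list of optimal elliptic curves $E/\bQ$ of conductor $N < 1000$ whose Mordell--Weil rank equals $2$. For each such $E$ the analytic rank satisfies $r_{\an}(E) \geq 2$: indeed, if $r_{\an}(E) \leq 1$ then by the theorem of Gross--Zagier and Kolyvagin the algebraic rank would equal $r_{\an}(E) \leq 1$, contradicting $\rank(E(\bQ)) = 2$. Hence the hypothesis $r_{\an}(E) \geq 2$ of Theorem~\ref{thm1} is met for every curve on the list, and it suffices to exhibit, for each of them, a critical polynomial whose factorization satisfies condition (1) or condition (2) of that theorem.

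Next, for each curve I would run the algorithms \emph{Poly Relation} and \emph{Poly Relation-YP} to produce the critical $j$-polynomial $F_{E,j}$ and a critical $h$-polynomial $F_{E,h}$ for a conveniently chosen modular function $h$. I would then factor these polynomials over $\bQ$ and inspect the irreducible factors. If some $F_{E,h}$ is already irreducible, condition (1) of Theorem~\ref{thm1} applies immediately. Otherwise I would test whether $F_{E,j}$ decomposes as $\prod_{k=1}^{m} H_{D_k}^{s_k}\cdot F_0$ with $F_0$ irreducible and the $D_k$ generating pairwise distinct imaginary quadratic fields; this amounts to matching some of the rational factors against Hilbert class polynomials, which can be done by computing $H_D(x)$ for the finitely many candidate discriminants $D$ (bounded in terms of the degrees of the factors in question) and comparing. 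In either case Theorem~\ref{thm1} yields $\rank(E_{\crit}(\bQ)) = 0$ for that curve, and running through the whole list proves the corollary; the outcome for every curve is recorded in the accompanying table of critical polynomials.

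The main obstacle I anticipate is twofold. First, the algorithms are analytic in nature — they recover an algebraic relation from $q$-expansions known to finite precision — so one must carry enough precision to trust the output and then certify it exactly, e.g. by verifying that the putative polynomial genuinely cuts out the branch locus in the function field $\bQ(X_0(N))$, or that $F_{E,j}$ has the expected degree $2g(X_0(N)) - 2$ and that the claimed factors really divide it. Second, for a curve where no convenient choice of $h$ (say $j$ itself, a hauptmodul when $X_0(N)$ has genus one, or a well-chosen quotient of modular units) produces an irreducible polynomial, one must either search among further functions $h$ or fall back on the Hilbert-class-polynomial criterion, and a priori it is not guaranteed that one of the two routes succeeds with the functions tried. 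The content of the corollary is precisely the empirical assertion that, for every rank-two curve of conductor below $1000$, at least one of the two criteria of Theorem~\ref{thm1} does go through.
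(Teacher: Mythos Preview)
Your proposal is correct and follows essentially the same route as the paper: compile the finite list of rank-two curves of conductor below $1000$, compute a critical polynomial for each via the two algorithms, factor it over $\bQ$, and verify case by case that one of the hypotheses of Theorem~\ref{thm1} is met (the results being recorded in Table~\ref{table: rank two}). Your worry about precision is unwarranted, though: the lemma immediately following Algorithm~\ref{IPR} shows that taking $M > 2\deg r \deg u$ forces $P(r,u)=0$ identically, so the algorithms output exact polynomials rather than numerical approximations requiring separate certification.
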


The paper is organized as follows: in Sections \ref{sec: IPR} and \ref{sec: yang pair}, we describe the algorithms {\em Poly Relation} and {\em Poly Relation-YP}. In Section \ref{sec: crit}, we prove Theorem~\ref{thm1}. Last, in Section \ref{sec: table}, we show a table of critical polynomials for all elliptic curves with rank 2 and conductor smaller than 1000, and  prove Corollary~\ref{cor2}.

\section{The algorithm {\em Poly relation}}
\label{sec: IPR}

Let $C/\bQ$ be a nonsingular projective curve. For a rational function $r \in \bQ(C)$, let $\Div_0(r)$ denote its divisor of 
zeros. We then define $\deg r=  \deg(\Div_0(r))$.
\begin{Def}
Let $C/\bQ$ be a nonsingular projective curve, and let $r, u$ be two non-constant rational functions on $C$. 
A {\em minimal polynomial relation between $r$ and $u$} is an irreducible polynomial $P(x,y) \in \bQ[x,y]$ such that $P(r,u) = 0$ and $deg_x(P) \leq \deg u, \deg_y(P) \leq \deg r$.
\end{Def}
Minimal polynomial relation always exists and is unique up to scalar multiplication.
Write $\Div(r) =  \sum n_z[z]$ and $P(x,y) = f_n(y)x^n + \cdots + f_1(y)x + f_0(y)$. We have

\begin{Prop}\label{prop: multiplicity}
If $\bQ(C) = \bQ(r,u)$ and $\gcd(f_0(y), f_n(y)) = 1$, then there is a constant $c \neq 0$ s.t.
\[
		f_0(y) = c \prod_{z \in\Div_{0}(r) \setminus \Div_{\infty}(u)} (y - u(z))^{n_z}.
\]
\end{Prop}

\begin{proof}
Dividing $P(x,y)$ by $f_n(y)$, we get $x^n + \cdots + \frac{f_0(x)}{f_n(y)}$, a minimal polynomial of $r$ over $\bQ(u)$. So $\Norm_{\bQ(r,u)/\bQ(u)}(r) = \frac{f_0(u)}{f_n(u)}$. The rest of the proof uses a theorem on extensions of valuations(see, for example,  \cite[Theorem 17.2.2]{stein2012algebraic}), which we now quote.

\begin{theorem}\label{thm: valuations}
Suppose $v$ is a nontrivial valuation on a field $K$ and let $L$ be a finite extension of $K$. Then for any $a \in L$,
\[
	\sum_{1 \leq j \leq J} w_j(a) = v(\Norm_{L/K}(a)),
\]
where the $w_j$ are normalized valuations equivalent to extensions of $v$ to $L$.
\end{theorem}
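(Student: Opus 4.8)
The plan is to prove the identity $\sum_{j} w_j(a) = v(\Norm_{L/K}(a))$ by reducing to the complete local case, where the norm becomes a product of local norms and each local factor contributes a single term to the sum. Since the only valuations relevant to this paper arise from closed points of the curve $C$ and are therefore discrete, I will carry out the argument for a discrete valuation $v$ normalized so that $v$ is surjective onto $\bZ$; the general case follows by the same completion argument. Let $K_v$ denote the completion of $K$ at $v$, with $v$ extended (uniquely) to $K_v$.

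First I would invoke the structure theorem for extensions of a complete valuation: if $w_1,\dots,w_J$ are the extensions of $v$ to $L$, then there is a canonical isomorphism of $K_v$-algebras $L \otimes_K K_v \cong \prod_{j=1}^{J} L_{w_j}$, where $L_{w_j}$ is the completion of $L$ at $w_j$, and under this map $a \in L$ is sent to its diagonal image $(a,\dots,a)$. (In the inseparable case one replaces $L$ by its maximal separable subextension and checks the valuations are unaffected.) The point of this decomposition is that the norm form is multiplicative across the factors: computing $\Norm_{L/K}(a)$ as the determinant of multiplication by $a$ on the $K$-vector space $L$, base change to $K_v$ and the block decomposition above give
\[
   \Norm_{L/K}(a) \;=\; \prod_{j=1}^{J} \Norm_{L_{w_j}/K_v}(a).
\]
Applying $v$ and using additivity over the product then reduces the theorem to the purely local statement $v\bigl(\Norm_{L_{w_j}/K_v}(a)\bigr) = w_j(a)$ for each $j$.

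Next I would establish this local norm formula. Write $\mathcal{O}$ for the valuation ring of $w_j$ on $L_{w_j}$, let $\Pi$ be a uniformizer, and let $f_j$ be the residue degree. Every $a \in L_{w_j}^\times$ factors as $a = u\,\Pi^{m}$ with $u \in \mathcal{O}^\times$ and $m$ the surjective valuation of $a$. Because $\Norm_{L_{w_j}/K_v}$ carries units of $\mathcal{O}$ to units of the valuation ring of $K_v$, the unit $u$ contributes nothing to $v$, so it suffices to compute $v(\Norm_{L_{w_j}/K_v}(\Pi))$. This is exactly the statement that the ideal norm of the maximal ideal $(\Pi)$ is $\mathfrak{p}^{f_j}$, equivalently $v(\Norm_{L_{w_j}/K_v}(\Pi)) = f_j$; one proves it by computing the determinant of multiplication by $\Pi$ modulo $\mathfrak{p}$ through the residue field extension of degree $f_j$. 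Thus $v(\Norm_{L_{w_j}/K_v}(a)) = f_j\,m$, and this is precisely $w_j(a)$ once $w_j$ is normalized (as in the statement) so that a uniformizer has value $f_j$ rather than $1$. Summing over $j$ yields $\sum_{j} w_j(a) = v(\Norm_{L/K}(a))$.

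The main obstacle I anticipate is bookkeeping the normalization so that the factor $f_j$ appearing in the honest local computation is absorbed into the definition of the ``normalized'' $w_j$, making the clean identity hold with no residue-degree weights; keeping this consistent with the chosen scaling of $v$ on $K$ is where scale errors creep in. The only genuinely substantive inputs are the tensor-product decomposition of $L\otimes_K K_v$ and the compatibility of element norms with ideal norms, both standard in valuation theory. Alternatively, one could bypass completions entirely by working with the integral closure $S$ of the valuation ring of $v$ in $L$ (a semilocal Dedekind domain), factoring the principal ideal $(a)S = \prod_j \mathfrak{P}_j^{m_j}$, and invoking $\Norm_{L/K}((a)) = (\Norm_{L/K}(a))$ together with $\Norm_{L/K}(\mathfrak{P}_j) = \mathfrak{p}^{f_j}$ to read off the same count.
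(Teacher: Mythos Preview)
The paper does not give its own proof of this statement: it is quoted verbatim as \cite[Theorem~17.2.2]{stein2012algebraic} and used as a black box inside the proof of Proposition~\ref{prop: multiplicity}. So there is no in-paper argument to compare against.

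That said, your proposal is the standard proof and is correct in outline. The decomposition $L\otimes_K K_v \cong \prod_j L_{w_j}$ yields the factorization of the global norm into local norms, and the local identity $v(\Norm_{L_{w_j}/K_v}(a)) = f_j\cdot \ord_{w_j}(a)$ then gives the result once the ``normalized'' $w_j$ is taken to mean $f_j\cdot \ord_{w_j}$, exactly as you flag. Your alternative via the integral closure and the ideal-norm identity $\Norm_{L/K}(\mathfrak{P}_j)=\mathfrak{p}^{f_j}$ is equally valid and arguably cleaner in the function-field setting of the paper, since there one is already working with closed points of a curve and their residue degrees. Either route would be an acceptable replacement for the citation; the paper simply chose not to reprove a textbook fact.
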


For any $z_0 \in C$ such that $u(z_0) \neq \infty$, consider the valuation $v = \ord_{(u - u(z_0))}$ on $\bQ(u)$. The set of extensions of $v$ to $\bQ(C) = \bQ(r,u)$ is in bijection with $\{ z \in C : u(z) = u(z_0) \}$. Take $a = r$ and
apply Theorem \ref{thm: valuations}, we obtain
\[
\sum_{z: u(z) = u(z_0)} \ord_z(r)  = \ord_{u-u(z_0)} \frac{f_0(u)}{f_n(u)}.
\]
Combining the identities for all $z_0 \in C \setminus \Div_\infty(u)$, we have
\[
	\prod_{z \in \Div(r): u(z) \neq \infty}{(y-u(z))^{n_z}}	=	c  \cdot \frac{f_0(y)}{f_n(y)}.
\]
If $r(z) = 0$, then the condition $\gcd(f_0(y), f_n(y)) = 1$ implies that $f_0(u(z)) = 0$ and $f_n(u(z)) \neq 0$. Therefore,
\[
		f_0(y) = c \prod_{z \in \Div_{0}(r) \setminus \Div_{\infty}(u)} (y - u(z))^{n_z}.
\]
This completes the proof.

\end{proof}

For completeness we also deal with the case where $u(z) = \infty$. The corresponding valuation is \\$\ord_{\infty}{(\frac{f}{g})} = \deg g - \deg f$, and we have
\begin{equation*}
	\sum_{z: u(z) = \infty} \ord_z(r)  = \deg f_n - \deg f_0.
\end{equation*}

We will apply Proposition \ref{prop: multiplicity} to the computation of $F_{E,j}$.
Consider $dj = j'(z)dz$, viewed as a differential on $X_0(N)$.
Fix the following two modular functions on $X_0(N)$:
\begin{equation}
\label{eq: ru}
r = j(j-1728) \frac{\omega}{dj}, \;  u = \frac{1}{j}.
\end{equation}

First we compute the divisor of $r$. Let $\cE_2(N)$ and $\cE_3(N)$ denote
the set of elliptic points of order 2 and 3 on $X_0(N)$, respectively. Then
\begin{equation}
\label{eq: divdj}
	\Div(dj) =  -j^*(\infty) - \sum_{c = cusp} c + \frac{1}{2} \left( j^*(1728) - \sum_{z \in \cE_2(N)} z \right) + \frac{2}{3} \left( j^*(0) - \sum_{z \in \cE_3(N)} z \right).
\end{equation}
Writing $j^*(\infty) = \sum_{c = cusp} e_c[c]$, we obtain
\begin{equation}
\label{eq: divr}
	\Div(r) = \Div(\omega) + \frac{1}{2} \left( j^*(1728) + \sum_{z \in \cE_2(N)} z \right) +  \frac{1}{3} \left( j^*(0) + 2\sum_{z \in \cE_3(N)} z \right)- \sum_{c = cusp} (e_c-1)[c].
\end{equation}
Note that (\ref{eq: divr}) may not be the simplified form of $\Div(r)$, due to possible cancellations when $\supp \Div(\omega)$ contains cusps. But since the definition of $F_{E,j}$ only involves critical points that are not cusps, the form of $\Div(r)$ in (\ref{eq: divr}) works fine for our purpose.

Next we show $\bQ(r,u) = \bQ(X_0(N))$ for the functions $r, u$ in (\ref{eq: ru}). First we prove a lemma.

\begin{lemma}
\label{lem: different}
Let $N > 1$ be an integer and $f \in S_2(\Gamma_0(N))$ be a newform. Suppose $\alpha \in SL_2(\bZ)$   such that  $f|[\alpha] = f$, then $\alpha \in \Gamma_0(N)$.
\end{lemma}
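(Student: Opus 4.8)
The plan is to use the standard interpretation of $X_0(N)$ as a moduli space and the relationship between modular forms and differentials. First I would recall that a newform $f \in S_2(\Gamma_0(N))$ corresponds, up to a constant, to a holomorphic differential $\omega_f = 2\pi i f(z)\,dz$ on $X_0(N)$, and that the action of $\alpha \in SL_2(\bZ)$ on $f$ via the slash operator $f|[\alpha]$ corresponds to pulling back this differential along the automorphism of $\mathcal{H}$ given by $\alpha$. So the hypothesis $f|[\alpha] = f$ says that $\alpha$ fixes the differential $\omega_f$ when we descend to the appropriate quotient; the goal is to conclude $\alpha$ normalizes (indeed lies in) $\Gamma_0(N)$.

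The key step I would carry out is an argument with Fourier expansions (Hecke eigenvalues). Write $\Gamma = \Gamma_0(N)$ and let $\Gamma' = \Gamma \cap \alpha^{-1}\Gamma\alpha$, a finite-index subgroup of both. Both $f$ and $f|[\alpha]$ are modular forms of weight $2$ on $\Gamma'$, and by hypothesis they coincide. Now $f|[\alpha]$, being $f$, is a $\Gamma_0(N)$-newform; in particular it is an eigenform for all Hecke operators $T_p$ with $p \nmid N$ with the same eigenvalues $a_p$. I would then invoke the theory of newforms (strong multiplicity one): if $g \in S_2(\Gamma_0(M))$ is a newform and $h$ is a cusp form on some congruence subgroup with $T_p h = a_p(g) h$ for almost all $p$, then $h$ is (a scalar multiple of) $g$ viewed inside $S_2(\Gamma_0(M))$, and in particular $h$ is invariant under all of $\Gamma_0(M)$. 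Applying this to $h = f|[\alpha]$, one sees that $f|[\alpha]$ is $\Gamma_0(N)$-invariant, which since it equals $f$ gives nothing new directly — so the real content must come from examining how $\alpha$ permutes the cusps or Fourier coefficients. A cleaner route: compare $q$-expansions. The form $f = \sum a_n q^n$ has $a_1 = 1$. Writing $\alpha = \begin{pmatrix} a & b \\ c & d\end{pmatrix}$, the expansion of $f|[\alpha]$ at $\infty$ involves $q^{1/w}$ for $w$ the width of the cusp $\alpha(\infty)$; for this to equal the expansion of $f$ (which is in integer powers of $q$ with $a_1=1$) forces the cusp $\alpha(\infty)$ to be $\Gamma_0(N)$-equivalent to $\infty$ with width $1$, and matching the leading behavior forces $c \equiv 0 \pmod N$, i.e. $\alpha \in \Gamma_0(N)$.

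Concretely the steps are: (i) reduce to comparing $f$ and $f|[\alpha]$ as $q$-expansions at $\infty$, using that $f|[\alpha]$ is holomorphic at $\infty$ since $f|[\alpha] = f$; (ii) expand $f|[\alpha]$ using the cusp $s = \alpha^{-1}(\infty)$ (or $\alpha(\infty)$) and its ramification/width data; (iii) observe that $f$ vanishes to exactly order $1$ at the cusp $\infty$ (as $a_1 = 1, a_0 = 0$) and has no fractional exponents, and deduce that $\alpha(\infty)$ must be $\Gamma_0(N)$-equivalent to $\infty$; (iv) translate this back to the matrix entry condition $N \mid c$. Alternatively, one can phrase (iii)–(iv) representation-theoretically: $f|[\alpha] = f$ implies $\alpha$ lies in the stabilizer in $GL_2^+(\bQ)$ of the line $\bC f \subset S_2$, and since $f$ is a newform this stabilizer is exactly $\bQ^\times \cdot \Gamma_0(N)$ (the "Atkin–Lehner / newform rigidity" statement), hence $\alpha \in SL_2(\bZ) \cap \bQ^\times\Gamma_0(N) = \Gamma_0(N)$.

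The main obstacle I anticipate is making step (iii)/(iv) rigorous without circularity: one must be careful that "$f$ is a newform" is used in an essential way (the statement is false for general $f \in S_2(\Gamma_0(N))$ — e.g. oldforms coming from level dividing $N$ can have extra symmetries), so the proof must genuinely invoke multiplicity one or the characterization of the stabilizer of a newform. I would expect the cleanest write-up to fix the subgroup $\Gamma'' = \langle \Gamma_0(N), \alpha \rangle \cap SL_2(\bZ)$, note $f$ is a weight-$2$ form on $\Gamma''$ fixed by $\alpha$, and if $\alpha \notin \Gamma_0(N)$ then $\Gamma''$ strictly contains $\Gamma_0(N)$; then a Fourier-coefficient comparison at the cusp $\infty$ (or an application of the fact that a normalized newform of level $N$ cannot be a form on a strictly larger congruence subgroup of $SL_2(\bZ)$ containing $\Gamma_0(N)$, because its level would drop) yields the contradiction.
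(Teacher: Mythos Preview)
Your proposal circles several ideas but does not close the argument, and the most concrete route has a real gap. In your $q$-expansion approach (steps (ii)--(iv)), the equality $f|[\alpha]=f=q+O(q^2)$ tells you how $f$ behaves near the cusp $\alpha(\infty)$, but you have no \emph{independent} information about $f$ at that cusp to compare with; nothing you have written rules out that $f$ simply has the same leading behaviour at a cusp not $\Gamma_0(N)$-equivalent to $\infty$. Concretely, your computation only shows that the differential $\omega_f$ vanishes to order $w-1$ at $\alpha(\infty)$, where $w$ is the width there, and this is no contradiction. Your route (3), invoking that the stabiliser of a newform in $GL_2^+(\bQ)$ is $\bQ^\times\cdot\Gamma_0(N)$, is essentially the statement of the lemma. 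Route (4) is the most promising, but the assertion that a newform of exact level $N$ cannot be modular for any group $\Gamma''$ with $\Gamma_0(N)\subsetneq\Gamma''\subseteq SL_2(\bZ)$ is not free: such $\Gamma''$ need not be of the form $\Gamma_0(M)$, so you must appeal to a genuine structural result from Atkin--Lehner theory (or to the conductor of the associated automorphic representation) to finish. You correctly flag this obstacle but do not overcome it.

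The paper's proof is short and avoids all of this via a trick you did not find. After multiplying $\alpha$ on the left and right by elements of $\Gamma_0(N)$, one reduces to $\alpha=\left(\begin{smallmatrix}1&0\\M&1\end{smallmatrix}\right)$. Then, using only that $f|[w_N]=\pm f$ for the Fricke involution $w_N=\left(\begin{smallmatrix}0&-1\\N&0\end{smallmatrix}\right)$, one gets $f|[w_N\alpha w_N]=f$ with $w_N\alpha w_N=\left(\begin{smallmatrix}-N&M\\0&-N\end{smallmatrix}\right)$ upper-triangular. This reads directly as $f(q)=f(\zeta_N^{-M}q)$, and comparing the leading coefficient (which is $1$ since $f$ is normalised) forces $\zeta_N^{-M}=1$, i.e.\ $N\mid M$. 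So the only ``newform'' inputs used are the Fricke eigenvalue and $a_1=1$; the Fricke conjugation, which converts the problem into a translation in $q$, is the missing idea.
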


\begin{proof}
Write $\alpha = \abcd{a}{b}{M}{d}$. First we show that it suffices to consider the case
where $d = 1$. Since $(M,d) = 1$, there exists $y,w \in \bZ$ such that $My+dw = 1$. By replacing $(y,w)$ with $(y+kd, w-kM)$ if necessary, we may
assume $(y,N) = 1$. So we can find $x,z \in \bZ$ such that $\gamma = \abcd{x}{y}{Nz}{w} \in \Gamma_0(N)$. Now $\alpha \gamma = \abcd{*}{*}{M}{1} \in \SL_2(\bZ)$ and $f|[\alpha\gamma] = f$.

Let $w_N = \abcd{0}{-1}{N}{0}$ be the Fricke involution on $X_0(N)$. Then $f|[w_N] = \pm f$, hence $f|[w_N\alpha w_N] = f$. We compute that $w_N\alpha w_N = \abcd{-N}{M}{0}{-N}$, thus
$f(q) = f|[\abcd{-N}{M}{0}{-N}](q) = f(q \zeta_N^{-M})$, where $\zeta_N = e^{2 \pi i /N}$.
The leading term of $f(q)$ is $q$, while the leading term of  $f(q \zeta_N^{-M})$ is $\zeta_N^{-M} q$. So we must have $\zeta_N^{-M} =1$, i.e., $N \mid M$. Hence $\alpha \in \Gamma_0(N)$ and the proof is complete.
\end{proof}

\begin{Prop}
\label{prop: gen}
Let $r, u$ be as defined in (\ref{eq: ru}), then $\bQ(r,u) = \bQ(X_0(N))$.
\end{Prop}

\begin{proof}
Consider the modular curve $X(N)$ defined over the field $K =  \bQ(\mu_N)$. Its function field $K(X(N))$ is a Galois extension of $K(u)$ containing $K(X_0(N))$.
It follows that the conjugates of $r$ in the extension $K(X(N))/K(u)$ are of the form $r_i = r|[\alpha_i]$ where $ \{\alpha_i\}$ is a set of coset representatives of $\Gamma_0(N) \backslash \SL_2(\bZ)$. Note that $\bQ(r,u) = \bQ(X_0(N))$ if and only if the $r_i$ are distinct. Suppose towards contradiction that there exists $i \neq j$ such that $r|[\alpha_i]  = r|[\alpha_j]$. Since $j$ and $j'$ are invariant under the action of $\SL_2(\bZ)$, we see that $f|[\alpha_i] = f|[\alpha_j]$. Let  $\alpha = \alpha_i \alpha_j^{-1}$, then $\alpha \in \SL_2(\bZ)$ and $f[\alpha] = f$. So Lemma~\ref{lem: different} implies $\alpha \in \Gamma_0(N)$, so $\Gamma_0(N) \alpha_i = \Gamma_0(N)\alpha_j$, a contradiction.
\end{proof}

\begin{lemma}
Let $g$ be the genus of $X_0(N)$. If $T \geq 2g-2$ is a positive integer, then $rj^T$ and $u$ satisfy the second condition of Proposition \ref{prop: multiplicity}.

\end{lemma}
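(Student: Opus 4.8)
Write $s := r\,j^{T}$ and let
$P(x,y) = f_n(y)x^{n} + \dots + f_1(y)x + f_0(y) \in \bQ[x,y]$
be the minimal polynomial relation between $s$ and $u$; since $P$ is irreducible it is primitive as a polynomial in $x$ over $\bQ[y]$. The second condition of Proposition~\ref{prop: multiplicity} is exactly that $\gcd(f_0,f_n)=1$. (The first condition holds as well, since $j = u^{-1}$ and $r = s\,u^{T}$, so $\bQ(s,u) \supseteq \bQ(r,j) = \bQ(r,u) = \bQ(X_0(N))$ by Proposition~\ref{prop: gen}.) The plan is to compute $\Div(s)$ finely enough to see that every zero of $s$ lies at a non-cusp while every pole of $s$ lies at a cusp, and then to convert this separation into the coprimality of $f_0$ and $f_n$.

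For the first step I would start from $\Div(s) = \Div(r) + T\,\Div(j)$, using formula~(\ref{eq: divr}) for $\Div(r)$ together with $\Div(j) = j^*(0) - \sum_{c}e_c[c]$. From~(\ref{eq: divr}) one reads off that for a non-cusp $z$ the coefficient of $[z]$ in $\Div(r)$ equals $\ord_z(\omega) \ge 0$ when $j(z)\notin\{0,1728\}$ and $\ord_z(\omega)+1 \ge 1$ when $j(z)\in\{0,1728\}$, while $\ord_z(j) \ge 0$ away from the cusps; hence $\ord_z(s) \ge 0$ for every non-cusp $z$, so \emph{all poles of $s$ occur at cusps} (this needs only $T \ge 0$). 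At a cusp $c$ one gets $\ord_c(s) = \ord_c(\omega) - (e_c-1) - T e_c$; since $\Div(\omega)$ is effective of degree $2g-2$ we have $\ord_c(\omega) \le 2g-2 \le T \le Te_c$, so $\ord_c(s) \le 0$, i.e.\ \emph{$s$ has no zero at any cusp}. This is the only point where the hypothesis $T \ge 2g-2$ is used.

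For the second step, note that, as $u = 1/j$, the cusps are precisely the points of $X_0(N)$ where $u = 0$. I would then show: (a) the only irreducible $p(y)\in\bQ[y]$ dividing $f_n$ is $y$; and (b) $f_0(0) \ne 0$; together these give that $f_0$ and $f_n$ have no common root. For (a): if $p \mid f_n$ and $y_0$ is a root of $p$, then the coefficients $c_j := f_j/f_n$ of the monic minimal polynomial $P/f_n = \prod_{\sigma}(x - s^{\sigma})$ of $s$ over $\bQ(u)$ cannot all be regular at $y_0$, for otherwise $P(x,y_0) = f_n(y_0)\cdot(\text{a monic polynomial in }x) \equiv 0$, forcing the minimal polynomial of $y_0$ to divide $P$ — impossible. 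But a coefficient $c_j$, being a symmetric function of the conjugates of $s$, has a pole at $y_0$ only if $s$ itself has a pole at some $z$ with $u(z) = y_0$; by the first step such a $z$ is a cusp, so $y_0 = 0$. For (b): since $\bQ(X_0(N)) = \bQ(s,u)$, the curve $X_0(N)$ is the normalization of the irreducible affine plane curve $\{P=0\}$ and surjects onto its projective closure, so if $f_0(0) = P(0,0) = 0$ there would be a point $z\in X_0(N)$ with $s(z) = 0$ and $u(z) = 0$; but $u(z) = 0$ forces $z$ to be a cusp, where $\ord_z(s) \le 0$ by the first step — contradicting $s(z) = 0$.

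The routine part is the divisor bookkeeping in the first step. The delicate part is the second step, where one must carefully match the vanishing of the extreme coefficients $f_0$ and $f_n$ of $P$ to the positions of the zeros and poles of $s$ along the fibers of $u$, taking care of possible ramification of $\bQ(X_0(N))/\bQ(u)$ at the cusps and of fibers of $u$ containing several points. One can instead carry out this step entirely inside the valuation framework already used for Proposition~\ref{prop: multiplicity}: Theorem~\ref{thm: valuations} gives $\ord_{y=0}(f_0/f_n) = \sum_{c}\ord_c(s) < 0$ (the sum over cusps, negative because $s$ is non-constant), and the remaining coefficients $c_j$ are controlled by applying the same theorem to $s^{-1}$, which is regular at every cusp by the first step; I would probably prefer this route since it reuses the machinery at hand.
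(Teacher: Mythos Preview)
Your proof is correct, and the divisor bookkeeping in your first step is essentially identical to the paper's opening observation that the poles of $r_1 = rj^T$ are supported exactly on the cusps once $T \ge 2g-2$. Where you diverge is in the second step: the paper argues by contradiction via the Newton polygon of $P$ with respect to a putative common factor $p$ of $f_0$ and $f_n$, deducing the existence of two conjugates of $r_1$ in the Galois closure $K(X(N))$ with, respectively, a zero and a pole over the same place of $\bQ(u)$; since every conjugate $r_1|[\alpha]$ has its poles at the cusps of $X(N)$ (the $\SL_2(\bZ)$-action preserves cusps), this is impossible. Your route avoids $X(N)$ entirely: you pin down $f_n$ as a power of $y$ by an integrality argument (if $s$ is regular over $y_0\ne 0$ then the monic minimal polynomial has coefficients regular at $y_0$), and you rule out $y\mid f_0$ by pulling the affine point $(0,0)$ back along the normalization map $X_0(N)\to\{P=0\}$. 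This is more elementary and stays on $X_0(N)$; the paper's Newton polygon argument is shorter once one is willing to invoke the Galois action. One small caveat: in your closing ``alternative route'', the single inequality $\ord_{y=0}(f_0/f_n)<0$ does not by itself give coprimality --- you still need the companion integrality statement for $s^{-1}$ (equivalently, that $f_j/f_0$ is regular at $0$) together with primitivity of $P$ to conclude $f_0(0)\ne 0$; your main argument via (a) and (b) already handles this cleanly.
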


\begin{proof}
Let $r_1 = rj^T$. When $T \geq 2g-2$, the support of  $\Div_\infty(r_1)$ is the set of all cusps. Suppose $\gcd(f_n, f_0) >1$. Let  $p(y)$ be an irreducible factor of $\gcd(f_0,f_n)$. Consider the valuation $\ord_p$ on the field $K(y)$. Since $P$ is irreducible, there exists
an integer $i$ with $0<i<n$ such that $p \nmid f_i$.  Thus the Newton polygon of $P$ with respect to the valuation $\ord_p$ has at least one edge with negative slope and one edge with positive slope. Therefore, for any Galois extension of $L$ of $K(u)$ containing $K(r,u)$ and a valuation $\ord_{\fp}$ on $L$ extending $\ord_p$, there exists two conjugates $r', r''$ of $r$ such that $\ord_{\fp}(r') < 0$ and $\ord_{\fp}(r'') >0$. This implies that $\Div_0(r') \cap \Div_\infty(r'') \neq \emptyset$. Fix $L = K(X(N))$, then all conjugates of $r_1$  in $K(X(N))/K(u)$ are of the form $r_1(\alpha z)$ for some $\alpha \in \SL_2(\bZ)$, Hence the set of poles of any conjugate of $r_1$ is the set of all cusps on $X(N)$, a contradiction.
\end{proof}

Note that for any $T \in \bZ$, we have $\bQ(rj^T,u) = \bQ(r,u) = \bQ(X_0(N))$. Hence when $T \geq 2g-2$, the pair $(rj^T,u)$ satisfies both assumptions of Proposition \ref{prop: multiplicity}. We thus obtain
\begin{theorem}
Let $T \geq 2g-2$ be a positive integer and let $P(x,y) = f_n(y)x^n + \cdots + f_1(y)x + f_0(y)$ be a minimal polynomial relation of $rj^T$ and $u$. Then there exist integers $A$, $B$ and a nonzero constant $c$ such that
\[
		F_{E,j}(y) = c f_0(1/y) \cdot y^{A} (y - 1728)^B.
\]
The integers $A$ and $B$ are defined as follows. Let $\epsilon_i(N) = |\cE_i(N)|$ for $i$ = 2 or 3 
and let $d_N = [SL_2(\bZ): \Gamma_0(N)]$, then $A = \deg f_n - T \cdot d_N - \frac{1}{3}(d_N + 2\epsilon_3(N))$, $B = -\frac{1}{2}(d_N+\epsilon_2(N))$.
\end{theorem}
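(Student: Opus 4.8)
The plan is to apply Proposition~\ref{prop: multiplicity} to the pair $(r_1, u)$ with $r_1 = rj^T$, which satisfies both of its hypotheses once $T \geq 2g-2$ as noted above, and then to convert the resulting product formula for $f_0$ into the claimed identity by the substitution $y \mapsto 1/y$, reading $A$ and $B$ off from the exponents.

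First I would compute $\Div(r_1) = \Div(r) + T\,\Div(j)$ from (\ref{eq: divr}) together with $\Div(j) = j^*(0) - j^*(\infty)$, and sort the contributions by the fibres of $j$ over $0$, $1728$, and $\infty$, distinguishing elliptic from non-elliptic points (the fibre over $1728$ has $\tfrac12(d_N + \epsilon_2(N))$ points, that over $0$ has $\tfrac13(d_N + 2\epsilon_3(N))$ points). One finds: each cusp $c$ receives coefficient $n_c(\omega) - ((T+1)e_c - 1)$ in $\Div(r_1)$, which is $\le 0$ since $\Div(\omega)=R_\varphi$ is effective of degree $2g-2 \le T$, so no cusp lies in $\supp \Div_0(r_1)$; each of the $\tfrac12(d_N+\epsilon_2(N))$ points over $1728$ receives coefficient $1 + n_z(\omega)$; and every point over $j = 0$ lies in $\supp\Div_\infty(u)$ and is therefore discarded by Proposition~\ref{prop: multiplicity}. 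The proposition then gives
\[
	f_0(y) = c\,\Bigl(y - \tfrac{1}{1728}\Bigr)^{m_1}\prod_{z\in\supp\Div(\omega),\ j(z)\neq 0, 1728,\infty}\Bigl(y - \tfrac{1}{j(z)}\Bigr)^{n_z(\omega)}, \qquad m_1 = \tfrac12\bigl(d_N+\epsilon_2(N)\bigr) + \sum_{j(z)=1728} n_z(\omega).
\]

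Now substitute $y \mapsto 1/y$ and use $\tfrac1y - \tfrac1a = -\tfrac{y-a}{ay}$: each factor becomes a nonzero constant times $(y - j(z))/y$, and the factor $(y-\tfrac1{1728})$ becomes a nonzero constant times $(y-1728)/y$. Comparing with $F_{E,j}(y) = y^{M_0}(y-1728)^{M_{1728}}\prod_z(y-j(z))^{n_z(\omega)}$, where $M_0$ and $M_{1728}$ are the total multiplicities of critical points over $j=0$ and $j=1728$ and the product runs over the same $z$ as above, one sees that $c\,f_0(1/y)\,y^A(y-1728)^B = c'\,F_{E,j}(y)$ precisely when the exponents of $(y-1728)$ and of $y$ on the two sides agree, which forces $B = M_{1728} - m_1 = -\tfrac12(d_N+\epsilon_2(N))$ and $A = M_0 + \deg f_0$; equivalently, for these $A,B$ the two sides have the same divisor as rational functions of $y$. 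It remains to re-express $A = M_0 + \deg f_0$ through $\deg f_n$, which I would do with the valuation $\ord_\infty$ on $\bQ(u)$, i.e. the identity $\sum_{z:\,u(z)=\infty}\ord_z(r_1) = \deg f_n - \deg f_0$ recorded just after Proposition~\ref{prop: multiplicity}. Since $u(z)=\infty$ is equivalent to $j(z)=0$, evaluating the left-hand side from $\Div(r_1)$ --- the fibre over $0$ contributes $(1+3T)\tfrac13(d_N-\epsilon_3(N)) + (1+T)\epsilon_3(N) + M_0$, which simplifies to $\tfrac13(d_N+2\epsilon_3(N)) + Td_N + M_0$ --- gives $\deg f_0 = \deg f_n - Td_N - \tfrac13(d_N+2\epsilon_3(N)) - M_0$, hence $A = \deg f_n - Td_N - \tfrac13(d_N+2\epsilon_3(N))$, as claimed.

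The step I expect to be the main obstacle is the bookkeeping in the divisor computation: one must keep the ramified preimage divisors $j^*(0)$, $j^*(1728)$, $j^*(\infty)$ distinct from the corresponding reduced divisors, treat elliptic and non-elliptic points in each fibre separately, and in particular verify that the cusp coefficients in $\Div(r_1)$ are genuinely non-positive --- this last point is exactly where the hypothesis $T\ge 2g-2$ and the effectivity of $\Div(\omega)$ enter. Granting that, the change of variable and the two exponent comparisons are routine.
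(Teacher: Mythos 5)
Your proof is correct and follows essentially the same route as the paper: apply Proposition~\ref{prop: multiplicity} to $(rj^T,u)$, account for the fibres over $j=0$, $1728$, $\infty$ via the divisor formula (\ref{eq: divr}), substitute $y\mapsto 1/y$, and recover the exponent $A$ from the $\ord_\infty$ identity $\sum_{u(z)=\infty}\ord_z(r_1)=\deg f_n-\deg f_0$. Your bookkeeping (cusp coefficients non-positive for $T\ge 2g-2$, coefficient $1+n_z(\omega)$ over $1728$, total $\tfrac13(d_N+2\epsilon_3)+Td_N+M_0$ over $0$) matches the paper's equations (a) and (b) exactly.
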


\begin{proof}
Write $\Div(\omega) =\sum n_z[z]$. Applying Proposition \ref{prop: multiplicity} to $rj^T$ and $u$, we get
\begin{equation*}
	\prod_{z: u(z) \neq 0,\infty} (y-u(z))^{n_z} \cdot (y-1/1728)^{\frac{1}{2}(d_N+\epsilon_2(N))}  = c f_0(y)
\tag{a}
\end{equation*}
and
\begin{equation*}
	\sum_{z: u(z) = \infty} \ord_z(\omega) + T \cdot d_N + \frac{1}{3}(d_N + 2\epsilon_3(N))  = \deg f_n - \deg f_0.
\tag{b}
\end{equation*}
To change from $u$ to $j$, we replace $y$ by $1/y$ in (a) and multiply both sides by $y^{\deg f_0}$ to obtain
\[
	\prod_{z: j(z) \neq 0,\infty} (y-j(z))^{n_z} \cdot (y-1728)^{\frac{1}{2}(d_N+\epsilon_2(N))}  = c f_0(1/y)y^{\deg f_0}.
\]
The contribution of $\{z \in \Div(\omega): j(z) = 0\}$ to $F_{E,j}$ can be computed from (b), so
\begin{align*}
	F_{E,j}(y)
	 &= c \cdot y^{\deg f_n - \deg f_0 -  T \cdot d_N - \frac{1}{3}(d_N + 2\epsilon_3(N))}y^{\deg f_0} \cdot (y-1728)^{-\frac{1}{2}(d_N+\epsilon_2(N))} f_0(1/y) \\
	& = c \cdot y^{\deg f_n - T \cdot d_N - \frac{1}{3}(d_N + 2\epsilon_3(N))}(y-1728)^{-\frac{1}{2}(d_N+\epsilon_2(N))} f_0(1/y).
\end{align*}
\end{proof}

Now we describe the algorithm {\em Poly Relation}.
\begin{algorithm}[H]
\caption{{\em Poly relation}}
\label{IPR}
\begin{algorithmic}[1]
    \REQUIRE $E$ = Elliptic Curve over $\bQ$; $N$ =  conductor of $E$;  $f$ = the newform attached to $E$; $g =  g(X_0(N))$, $d_N, \epsilon_2(N)$, $\epsilon_3(N)$, and $c_N$ = number of cusps of $X_0(N)$.
    \ENSURE The critical $j$-polynomial $F_{E,j}(x)$.
    \STATE  Fix a large integer $M$. $T := 2g-2$.
    \STATE  $r_1:= j^{2g-1}(j-1728)\frac{f}{j'}$, $u: = \frac{1}{j}$.
    \STATE $\deg r_1 :=  (2g-1)d_N - c_N, \deg u := d_N$.
    \STATE Compute the $q$-expansions of $r_1$ and $u$ to $q^{M}$.
    \STATE Let $\{c_{a,b}\}_{0 \leq a \leq \deg u, 0 \leq b \leq \deg r_1}$ be unknowns, compute a vector that spans the one-dimensional vector space \\
    $K$ = $\{(c_{a,b}) : \sum c_{a,b} r(q)^a u(q)^b \equiv 0 \pmod {q^M} \}$.

	\STATE $P(x,y) :=  \sum c_{a,b}x^ay^b$. Write $P(x,y) = f_n(y)x^n + \cdots + f_1(y)x + f_0(y)$.
	\STATE $A := \deg f_n - T \cdot d_N - \frac{1}{3}(d_N + 2\epsilon_3(N))$, $B :=  -\frac{1}{2}(d_N+\epsilon_2(N))$.
	\STATE Output $F_{E,j}(x) = c f_0(1/x) \cdot x^{A} (x - 1728)^B$.
\end{algorithmic}
\end{algorithm}

An upper bound on the number of terms $M$ in the above algorithm can be taken to be $2\deg r \deg u +1$, by the following lemma.
\begin{lemma}
Let $r, u \in \bQ(X_0(N))$ be non-constant functions. If there is a polynomial $P \in \bQ[x,y]$
such that $\deg_x P \leq \deg u$, $\deg_y P \leq \deg r$, and
\[
	P(r,u) \equiv 0 \pmod{q^M}
\]
for some $M > 2\deg u \deg r$,  then $P(r,u) = 0$.
\end{lemma}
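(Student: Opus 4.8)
The plan is to regard $g := P(r,u)$ as a single element of $\bQ(X_0(N))$ and to play its total number of zeros against its order of vanishing at the cusp $\infty$. Suppose, for contradiction, that $g \neq 0$. Then $g$ is a nonzero rational function on the nonsingular projective curve $X_0(N)$, so the standard fact $\deg \Div_0(g) = \deg \Div_\infty(g)$ applies; I will bound the right-hand side and show the left-hand side is already too large.

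First I would produce a single effective divisor bounding all the poles of $g$ at once. Write $P(x,y) = \sum_{a,b} c_{a,b}\, x^a y^b$ with $0 \le a \le \deg_x P$ and $0 \le b \le \deg_y P$. Since $\Div_\infty(r)$ and $\Div_\infty(u)$ are effective, each monomial satisfies $\Div_\infty(r^a u^b) \le a\,\Div_\infty(r) + b\,\Div_\infty(u) \le D$, where $D := (\deg_x P)\,\Div_\infty(r) + (\deg_y P)\,\Div_\infty(u)$; a sum of functions each with pole divisor $\le D$ again has pole divisor $\le D$, so $\Div_\infty(g) \le D$. Taking degrees, using $\deg \Div_\infty(r) = \deg r$ and $\deg \Div_\infty(u) = \deg u$, and invoking the hypotheses $\deg_x P \le \deg u$, $\deg_y P \le \deg r$, we get
\[
\deg \Div_0(g) \;=\; \deg \Div_\infty(g) \;\le\; (\deg_x P)\deg r + (\deg_y P)\deg u \;\le\; 2\,\deg u\,\deg r .
\]

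Next I would translate the congruence hypothesis into a statement about $\ord_\infty$: the cusp $\infty$ has width $1$ on $X_0(N)$, so $q$ is a local uniformizer there, and $g = P(r,u) \equiv 0 \pmod{q^M}$ means precisely that $g$ is holomorphic at $\infty$ with $\ord_\infty(g) \ge M$. Then the cusp $\infty$ alone contributes at least $M$ to $\Div_0(g)$, so $M \le \ord_\infty(g) \le \deg \Div_0(g) \le 2\,\deg u\,\deg r$, contradicting $M > 2\,\deg u\,\deg r$. Hence $g = P(r,u) = 0$. The argument is short, and the only points needing a little care are exactly these two: one must bound every monomial, and hence the whole sum, by the \emph{single} divisor $D$ rather than by monomial-optimal divisors (the degree of a pointwise maximum of divisors is not controlled by the maximum of the degrees), and one must note that ``$\equiv 0 \pmod{q^M}$'' genuinely forces vanishing to order $\ge M$ at the point $\infty$, which is where the width-$1$ fact enters; everything else is the standard $\deg \Div_0 = \deg \Div_\infty$.
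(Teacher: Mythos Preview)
Your proof is correct and follows essentially the same approach as the paper's: bound the pole divisor of $g = P(r,u)$ by $(\deg_x P)\Div_\infty(r) + (\deg_y P)\Div_\infty(u)$ to get $\deg g \le 2\deg u\deg r$, then observe that the congruence forces $\ord_{[\infty]}(g) \ge M$, which exceeds the total degree and yields a contradiction. Your write-up is more careful than the paper's---you make explicit why a single effective divisor $D$ dominates all monomials simultaneously and why width $1$ at $[\infty]$ makes $q$ a uniformizer---and you handle the constant case cleanly within the same inequality chain rather than treating it separately, but the underlying argument is identical.
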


\begin{proof}
Suppose $P(r,u)$ is non-constant as a rational function on $X_0(N)$, then $\deg P(r,u) \leq \deg r^{\deg u} u^{\deg r} = 2\deg u \deg r$. It follows from $P(r,u) \equiv 0 \pmod{q^M}$ that $\ord_{[\infty]}P(r,u) \geq M$. Since $M> 2\deg u \deg r$, the number of zeros of $P(r,u)$ is greater than its number of poles, a contradiction. Thus $P(r,u)$ is a constant function. But then $P(r,u)$ must be 0 since it has a zero at $[\infty]$. This completes the proof.
\end{proof}

\begin{remark}
\label{rem: in practice}
When $N$ is square free, there is a faster method that computes $F_{E,j}$ by computing
the {\em Norm} of the modular form $f$, defined as $\Norm(f) = \prod f|[A_i]$, where $\{ A_i \}$ is
a set of right coset representatives of $\Gamma_0(N)$ in $\SL_2(\bZ)$. This approach is
inspired by Ahrlgen and Ono \cite{ahlgren2003weierstrass}, where $j$-polynomials of Weierstrass points on $X_0(p)$ are computed for $p$ a prime.
\end{remark}

\begin{remark}
Also for the sake of speed, instead of taking $T = 2g-2$ in the algorithm, we may take $T = 0$. First, if $\Div(\omega)$ does not contain cusps(for example, this happens if $N$ is square free), then the functions $r$ and $u$ already satisfies the assumptions of Proposition \ref{prop: multiplicity}. Second, if $\Div(\omega)$ does contain cusps, then $\deg (r)$ will be smaller than its set value
in the algorithm, due to cancellation between zeros and poles. As a result, the vector space $K$ will have dimension greater than 1. Nonetheless, using a basis of  $K$, we could construct a set of polynomials $P_i(x,y)$ with $P_i(r,u) = 0$. Now $P(x,y)$ is the greatest common divisor of the $P_i(x,y)$.
\end{remark}

We show a table of critical $j$-polynomials. Recall that $H_d(x)$ denotes the Hilbert class polynomial associated to a negative discriminant $d$. We use Cremona's labels for elliptic curves in Table~\ref{table: small}.

\begin{table}[h]
   \centering
   \caption{Critical polynomials for some elliptic curves with conductor smaller than $100$}
   \vspace{5mm}
    \begin{tabular}{ | l | l | l |p{5cm} |}
    \hline
    $E$ & $g(X_0(N))$ & Factorization of $F_{E,j}(x)$     \\[5pt] \hline \hline
    37a & 2 & $H_{-148}(x)$ \\ \hline
    37b & 2 &  $H_{-16}(x)^2$  \\ \hline
    44a &   4  &   $H_{-44}(x)^2$ \\ \hline
    48a &  3   &  $1$ \footnotemark \\ \hline
    67a &  5  &  $x^8 + 1467499520383590415545083053760x^7 + \cdots$ \\ \hline
    89a &  7  & $H_{-356}(x)$ \\ \hline
    \end{tabular}
    \label{table: small}
   \end{table}

\footnotetext[1]{In this case $\Div(\omega) = [1/4] + [3/4] + [1/12] +[7/12]$ in supported
on cusps.}

\section{Yang pairs and the algorithm {\em Poly Relation-YP}}
\label{sec: yang pair}

The main issue with the algorithm {\em Poly Relation} is efficiency. The matrix we used to solve for $\{c_{a,b}\}$ has size roughly the conductor $N$. As $N$ gets around $10^3$, computing the matrix kernel becomes time-consuming. So a new method is needed.

We introduce an algorithm {\em Poly Relation-YP} to compute critical polynomials attached to elliptic curves. The algorithm is inspired by an idea of Yifan Yang in \cite{yang2006defining}. The algorithm {\em Poly Relation-YP} does not compute the critical $j$-polynomial. Instead, it computes a critical $h$-polynomial, where $h$ is some modular function on $X_0(N)$ chosen within the algorithm. First we restate a lemma of Yang.
\begin{lemma}[Yang \cite{yang2006defining}]
\label{lem: yanggood}
Suppose $g$, $h$ are modular functions on $X_0(N)$ with a unique pole of order $m$, $n$ at the cusp $[\infty]$, respectively, such that $\gcd(m,n) =1$. Then \\
(1) $\bQ(g,h) = \bQ(X_0(N))$. \\
(2) If the leading Fourier coefficients of $g$ and $h$ are both 1, then there is a minimal
polynomial relation between $g$ and $h$ of form
\begin{equation}
\label{eq: yang}
	y^m - x^n + \sum_{a,b \geq 0, am+bn < mn} c_{a,b}x^ay^b.
\end{equation}
\end{lemma}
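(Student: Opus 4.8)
The plan is to derive both parts from standard facts about the function field $\bQ(X_0(N))$, the key point being that $g$ and $h$ each have a single pole, at $[\infty]$.

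For part (1), I would use that for a non-constant $\phi \in \bQ(X_0(N))$ the field degree $[\bQ(X_0(N)):\bQ(\phi)]$ equals the number of poles of $\phi$ counted with multiplicity; thus $[\bQ(X_0(N)):\bQ(g)] = m$ and $[\bQ(X_0(N)):\bQ(h)] = n$. Since $\bQ(g) \subseteq \bQ(g,h) \subseteq \bQ(X_0(N))$, the index $[\bQ(X_0(N)):\bQ(g,h)]$ divides $m$; by symmetry it also divides $n$, and since $\gcd(m,n) = 1$ it is $1$, so $\bQ(g,h) = \bQ(X_0(N))$.

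For part (2), the plan is to take $P(x,y)$ to be the minimal polynomial of $h$ over $\bQ(g)$ and pin down its shape using orders of vanishing at $[\infty]$. By part (1) together with the degree count above, this minimal polynomial has degree $m$ in $y$, so write it as $P(x,y) = y^m + a_{m-1}(x)y^{m-1} + \cdots + a_0(x)$ with $a_i \in \bQ(x)$. The first step is to show $a_i \in \bQ[x]$: the ring $R$ of functions on $X_0(N)$ with no poles outside $[\infty]$ is the integral closure of $\bQ[g]$ in $\bQ(X_0(N))$, hence a finitely generated, torsion-free --- therefore free, of rank $m$ --- module over the principal ideal domain $\bQ[g]$; since $h \in R$, the characteristic polynomial of multiplication by $h$ on $R$ is monic of degree $m$ over $\bQ[g]$ and annihilates $h$, so it equals $P$, giving $a_i \in \bQ[g]$. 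The second step is bookkeeping with orders at $[\infty]$: substituting $x=g$, $y=h$ gives $h^m = -\sum_{i<m} a_i(g)h^i$, where $a_i(g)h^i$ has a pole of order $m\deg a_i + ni$ at $[\infty]$ when $a_i \neq 0$. Since $\gcd(m,n)=1$, these orders lie in distinct residue classes modulo $m$, so they are pairwise distinct, no cancellation occurs, and $\max_{i:\,a_i\neq 0}(m\deg a_i + ni) = mn$. Hence some $i$ with $a_i \neq 0$ satisfies $m\deg a_i + ni = mn$; since $ni \equiv mn \equiv 0 \pmod m$ forces $i = 0$, we conclude $a_0 \neq 0$, $\deg a_0 = n$, and $m\deg a_i + ni < mn$ for every $i \geq 1$. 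Comparing the leading $q$-expansion coefficients on the two sides of $h^m = -a_0(g) - \cdots$ --- both sides begin with a scalar times $q^{-mn}$, and $g$, $h$ have leading coefficient $1$ --- shows the leading coefficient of $a_0$ is $-1$. Collecting terms,
\[
  P(x,y) = y^m - x^n + \sum_{\substack{a,b \geq 0 \\ am+bn < mn}} c_{a,b}\,x^a y^b,
\]
which is a minimal polynomial relation between $g$ and $h$: it is irreducible, vanishes on $(g,h)$, and satisfies $\deg_y P = m = \deg g$ and $\deg_x P = n = \deg h$.

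The one place I expect more than formal manipulation is the integrality step --- that the $a_i$ are polynomials in $g$ rather than merely rational functions --- which is exactly where the hypothesis that $g$ and $h$ have no poles away from $[\infty]$ is used. Once that is in hand, the precise form $y^m - x^n + \cdots$ falls out of the coprimality of $m$ and $n$, which separates the pole orders of the monomials $g^a h^b$, together with an elementary comparison of leading Fourier coefficients. (An alternative route for part (2) would count dimensions: compare $\dim L(mn[\infty])$ from Riemann--Roch with the number of lattice points under the line $am+bn=mn$; this works but requires more bookkeeping than the minimal-polynomial argument.)
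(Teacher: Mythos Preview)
Your argument is correct. Note, however, that the paper does not supply its own proof of this lemma: it is quoted from Yang~\cite{yang2006defining} and used as a black box, so there is no proof in the paper to compare against. Your route --- degree-of-function-field for part~(1), integrality of $h$ over $\bQ[g]$ via the affine ring $R = \mathcal{O}(X_0(N)\setminus\{[\infty]\})$ for the polynomiality of the coefficients, and then separating pole orders modulo $m$ using $\gcd(m,n)=1$ --- is the standard one and matches what one finds in Yang's paper. One small streamlining: once you know $h \in R$ and $R$ is the integral closure of $\bQ[g]$, you can conclude directly that the minimal polynomial of $h$ over $\bQ(g)$ has coefficients in $\bQ[g]$ (integral closure of a UFD is integrally closed, and the coefficients of a monic minimal polynomial lie in the integral closure); the detour through the characteristic polynomial of multiplication-by-$h$ on a free basis is not needed.
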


Two non-constant modular functions on $X_0(N)$ are said to be a {\em Yang pair} if they satisfy the assumptions of Lemma \ref{lem: yanggood}. Following \cite{yang2006defining}, we remark that in order to find a minimal polynomial relation of a Yang pair, we can compute the Fourier expansion of $y^m - x^n$ and use products of form $x^ay^b$ to cancel the pole at $[\infty]$ until we reach zero. This approach is significantly faster than the method we used in {\em Poly Relation}, which finds a minimal polynomial relation of two arbitrary modular functions. This gain in speed is the main motivation of introducing {\em Poly Relation-YP}.

Let 
\[ 
\eta = q^{\frac{1}{24}}\prod_{n \geq 1}(1-q^n)
\] be the Dedekind $\eta$ function.
For any positive integer $d$, define the function $\eta_d$ as $\eta_d(z) = \eta(dz)$. 

An {\em $\eta$-product of level $N$} is a function of the form 
\[ 
h(z) = \prod_{d \mid N} \eta_d(z)^{r_d}
\]
where $r_d \in \bZ$ for all $d \mid N$.

The next theorem of Ligozat gives sufficient conditions for a $\eta$-product to be a modular function on $X_0(N)$.
\begin{lemma}[Ligozat's Criterion \cite{ligozat1975courbes}]
Let $h = \prod_{d \mid N} \eta_d(z)^{r_d}$ be an $\eta$-product of
level $N$. Assume the following: \\
(1)  $\sum_d r_d \frac{N}{d} \equiv 0 \pmod{24}$; (2)  $\sum_d r_d d \equiv 0 \pmod{24}$;
(3)  $\sum_d r_d = 0$; (4) $\prod_{d \mid N} (\frac{N}{d})^{r_d} \in \bQ^2$.  \\
Then $h$ is a modular function on  $X_0(N)$.
\end{lemma}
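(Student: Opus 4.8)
\medskip
\noindent\emph{Proof plan.} The plan is to reduce everything to one assertion — that $h$ is invariant under $\Gamma_0(N)$ — and then settle that assertion using the transformation law of the Dedekind $\eta$ function. Holomorphy and non-vanishing of $h$ on the upper half-plane $\mathcal{H}$ are inherited from $\eta$. For meromorphy at the cusps I would pass to $24$-th powers: since $\eta^{24}=\Delta$, hypothesis (3) gives $h^{24}=\prod_{d\mid N}\Delta(dz)^{r_d}$, a ratio of products of the weight-$12$ forms $\Delta(dz)$, each a modular form on $\Gamma_0(N)$ with integral $q$-expansion and finite order of vanishing at every cusp; hence $h^{24}$, and therefore $h$, is meromorphic at the cusps. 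So the whole content is the $\Gamma_0(N)$-invariance.

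For that I would start from the classical formula: for $\gamma=\abcd{a}{b}{c}{d}\in\SL_2(\bZ)$ with $c>0$, $\eta(\gamma z)=\nu(\gamma)\,(cz+d)^{1/2}\eta(z)$, where $\nu(\gamma)$ is an explicit $24$-th root of unity built from a Jacobi symbol $\left(\tfrac{d}{c}\right)$ and an exponential $\exp\!\big(\pi i\big(\tfrac{a+d}{12c}-s(d,c)-\tfrac{1}{4}\big)\big)$ with $s(d,c)$ the Dedekind sum (the cases $c\le 0$ reduce to $c>0$ by replacing $\gamma$ with $-\gamma$, and $c=0$ is handled separately). The key device is the identity $d\cdot\gamma z=\gamma_d\cdot(dz)$ with $\gamma_d=\abcd{a}{bd}{c/d}{d}$, valid because $d\mid N\mid c$ forces $\gamma_d\in\SL_2(\bZ)$; it gives $\eta_d(\gamma z)=\nu(\gamma_d)(cz+d)^{1/2}\eta_d(z)$, and taking the product over $d\mid N$ with exponents $r_d$ yields $h(\gamma z)=\big(\prod_{d\mid N}\nu(\gamma_d)^{r_d}\big)(cz+d)^{\frac{1}{2}\sum r_d}h(z)$. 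Hypothesis (3) removes the automorphy factor, so $h(\gamma z)=\chi(\gamma)\,h(z)$ with $\chi(\gamma):=\prod_{d\mid N}\nu(\gamma_d)^{r_d}$; comparing $(\gamma_1\gamma_2)z$ with $\gamma_1(\gamma_2z)$ shows $\chi$ is multiplicative, and $\chi(\gamma)^{24}=1$ because $h^{24}$ is already $\Gamma_0(N)$-invariant by the first paragraph. It remains to prove that $\chi$ is trivial.

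I would first note the easy case: $\eta(z+1)=e^{\pi i/12}\eta(z)$ gives $\chi(T)=\exp\!\big(\tfrac{\pi i}{12}\sum_{d\mid N}r_d d\big)$, which is $1$ exactly by hypothesis (2). For general $\gamma\in\Gamma_0(N)$ (reduced to $c>0$), I would insert the explicit formula for $\nu$ into $\chi(\gamma)=\prod_d\nu(\gamma_d)^{r_d}$ and split the result into two parts. The product of Jacobi symbols should, after quadratic reciprocity and using $\sum r_d=0$ together with $N\mid c$, collapse to a symbol whose value is governed by whether $\prod_{d\mid N}(N/d)^{r_d}$ is a rational square — this is precisely hypothesis (4). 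The exponential part is $\exp\!\big(\pi i\sum_d r_d\big(\tfrac{a+d}{12\,c/d}-s(d,c/d)\big)\big)$, the $-\tfrac{1}{4}$ contributions having cancelled by (3); applying Dedekind's reciprocity to each $s(d,c/d)$ and simplifying using $N\mid c$ should reduce the exponent modulo $2$ to an integer combination of $\sum_d r_d d$ and $\sum_d r_d(N/d)$, both $\equiv0\pmod{24}$ by (2) and (1). This gives $\chi\equiv 1$, hence $h$ is a modular function on $X_0(N)$.

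The reductions are routine; the main obstacle I anticipate is the last paragraph — making the Jacobi-symbol factors match condition (4) exactly, and running the Dedekind-sum reciprocity bookkeeping so the exponential collapses onto conditions (1) and (2), all while tracking the sign conventions in $\nu$ (notably the extra factor in Jacobi reciprocity when $c/d$ is even). I would organize that computation by first proving the criterion for $N$ a prime power and then assembling the general case multiplicatively in $N$.
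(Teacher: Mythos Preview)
The paper does not prove this lemma at all; it is quoted without proof as a result of Ligozat \cite{ligozat1975courbes}, so there is no ``paper's own proof'' to compare against. Your plan is the standard route to Ligozat's criterion and is essentially sound: weight zero from (3), meromorphy at the cusps via $h^{24}=\prod_d \Delta(dz)^{r_d}$, and triviality of the multiplier $\chi$ from the explicit $\eta$-multiplier, with (2) killing $\chi(T)$ and (1), (4) handling the Dedekind-sum and Jacobi-symbol pieces respectively. Two minor cautions: you have a notational collision, using $d$ both for the lower-right entry of $\gamma$ and for the divisor of $N$ in $\gamma_d$ (the identity $d\cdot\gamma z=\gamma_d(dz)$ is correct once you disentangle these); and the ``assemble multiplicatively in $N$'' step at the end is not quite the right organizing principle, since $\Gamma_0(N)$ does not decompose as a product over prime powers --- it is cleaner to check $\chi$ directly on a generating set for $\Gamma_0(N)$ (e.g.\ $T$ together with matrices $\gamma$ with $c=N$), which already forces the general case.
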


If $h \in \bQ(X_0(N))$ is an $\eta$-product, then it is a fact that the divisor $\Div(h)$ is supported on the cusps of $X_0(N)$. The next theorem allows us to construct $\eta$-products with prescribed divisors.

\begin{lemma}[Ligozat \cite{ligozat1975courbes}]
\label{lem:ligozat}
Let $N > 1$ be an integer. For every positive divisor $d \mid N$, let $(P_d)$ denote the sum of all cusps on $X_0(N)$ of denominator $d$. Let $\phi$ denote the Euler's totient function. Then there exists an explicitly computable $\eta$-product $h \in \bQ(X_0(N))$ such that
\[
\Div(h) = m_d( \, (P_d) - \phi(\gcd(d,N/d))[\infty] \,)
\]
for some positive integer $m_d$.
\end{lemma}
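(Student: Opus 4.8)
The plan is to realize the desired $\eta$-product explicitly as a suitable power of a single "building block" $\eta$-quotient attached to the divisor $d$, and then verify Ligozat's Criterion for that power. First I would recall the classical formula for the order of vanishing of an $\eta$-product at the cusps of $X_0(N)$: if $h = \prod_{\delta \mid N} \eta_\delta^{r_\delta}$, then at the cusp of denominator $c$ (with $c \mid N$) one has
\[
\ord_{c}(h) = \frac{N}{24}\sum_{\delta \mid N} \frac{\gcd(c,\delta)^2}{c \,\delta\, \gcd(c, N/c)}\, r_\delta
\]
up to a fixed positive normalizing constant depending only on $c$ and $N$ (the width of the cusp). This is a linear functional in the exponent vector $(r_\delta)$. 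I would then consider the linear map $L \colon \bZ^{\{\delta \mid N\}} \to \bQ^{\{c \mid N\}}$ sending $(r_\delta)$ to the tuple of these (normalized) orders. The key structural input is that this matrix — essentially the matrix with entries $\gcd(c,\delta)^2/(c\delta)$ — is invertible over $\bQ$; this is a standard fact (it factors through multiplicativity into a tensor product of invertible local matrices at each prime dividing $N$). Hence for the target divisor supported on cusps we can solve for a rational exponent vector $(r_\delta) \in \bQ^{\{\delta\mid N\}}$; clearing denominators gives an integral vector, i.e.\ an honest $\eta$-product, whose divisor is a positive multiple $m_d$ of $(P_d) - \phi(\gcd(d,N/d))[\infty]$. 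The coefficient $\phi(\gcd(d,N/d))$ appearing on $[\infty]$ is forced: it is exactly the number of cusps of denominator $d$ (so that $(P_d)$ has degree $\phi(\gcd(d,N/d))$ times the common width factor), and since any $\eta$-product has degree-zero divisor, the multiplicity at $[\infty]$ must balance $(P_d)$.

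Next I would check the four congruence/square conditions of Ligozat's Criterion for the exponent vector just produced. Condition (3), $\sum_\delta r_\delta = 0$, is automatic because we have arranged $h$ to have no zeros or poles away from the cusps and in particular it is holomorphic and nonvanishing on the upper half-plane with $q$-expansion starting at a finite order — equivalently, weight zero, which is precisely $\sum r_\delta = 0$. Conditions (1) and (2), the two mod-$24$ congruences $\sum_\delta r_\delta (N/\delta) \equiv 0$ and $\sum_\delta r_\delta \delta \equiv 0 \pmod{24}$, and condition (4), $\prod_\delta (N/\delta)^{r_\delta} \in \bQ^{\times 2}$, may fail for the minimal integral solution, but they can always be forced by replacing $h$ with a fixed positive power $h^{m_d}$: raising to the $24$-th power kills (1) and (2), and to the square kills (4), so $m_d = 48$ (or a divisor thereof) always works. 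This is exactly why the statement only claims the divisor is a \emph{positive multiple} $m_d(\cdots)$ rather than the divisor itself. Since $\Div(h^{m_d}) = m_d \Div(h)$, the divisor identity is preserved under this adjustment.

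The main obstacle — and the step I would spend the most care on — is the invertibility of the "cusp-order" matrix and the bookkeeping that identifies the functional $(r_\delta) \mapsto \ord_c(h)$ with the correct combinatorial normalization, because the cusps of $X_0(N)$ of a given denominator $c$ come with width $N/(c \cdot \gcd(c, N/c))$ and there are $\phi(\gcd(c,N/c))$ of them, all with equal order of vanishing by the Galois action permuting them. Getting these normalizations consistent is what makes the coefficient $\phi(\gcd(d,N/d))$ land precisely on $[\infty]$. Once the matrix is known to be invertible (a prime-by-prime reduction using the multiplicativity of $\gcd$ reduces this to an explicit $2\times 2$ or small Vandermonde-type determinant at each prime power $p^e \| N$), the rest is the clearing-of-denominators and power-raising argument above, all of which is effective — giving the asserted explicit computability of $h$.
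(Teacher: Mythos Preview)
The paper does not actually prove this lemma; it is stated as a result of Ligozat with a citation, and the only justification given is the brief Remark immediately following it, which says that the order of vanishing of an $\eta$-product at each cusp is a linear combination of the exponents $r_\delta$, so prescribing the divisor amounts to solving a linear system for the $r_\delta$. Your proposal is a correct and considerably more detailed version of exactly this idea --- you supply the explicit cusp-order formula, the invertibility of the associated matrix (via the prime-by-prime tensor factorization), the degree-zero bookkeeping that forces the coefficient $\phi(\gcd(d,N/d))$ at $[\infty]$, and the observation that raising to a bounded power (a divisor of $48$) forces Ligozat's congruence and square conditions, which is precisely why the statement only claims $\Div(h)$ is a positive multiple $m_d$ of the target. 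One small point worth tightening: your argument that condition~(3) ($\sum_\delta r_\delta = 0$) is ``automatic'' is phrased a bit circularly; the clean justification is that the degree of $\Div(\prod \eta_\delta^{r_\delta})$ is a nonzero constant times $\sum_\delta r_\delta$, so a degree-zero target forces the weight to vanish. With that adjustment, your argument is complete and matches the approach the paper gestures at.
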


\begin{remark}
\label{rem: expliciteta}
By `explicitly computable' in Lemma~\ref{lem:ligozat}, we mean that one can compute a set of integers $\{r_d :  d\mid N \}$ that defines the $\eta$-product $h$ with desired property. It is a fact that the order of
vanishing of an $\eta$ product at any cusp of $X_0(N)$  is an linear combination of
the integers $r_d$. So prescribing the divisor of an $\eta$-product is equivalent to giving a
linear system on the variables $r_d$. Thus we can solve for the $r_d$'s and obtain the $q$-expansion of $h$ from the $q$-expansion of $\eta$.
\end{remark}

\begin{Prop}
\label{cor: majorize}
Let $D \geq 0 $ be a divisor on $X_0(N)$ such that $D$ is supported on the cusps. Then there exists an explicitly computable $\eta$-product $h \in \bQ(X_0(N))$ such that $\Div(h)$ is of the form $D' - m[\infty]$, where $m$ is a positive integer and $D' \geq D$.
\end{Prop}

Recall our notation from section \ref{sec: IPR} that $r = j(j-1728)\frac{\omega}{dj}$.
\begin{Prop}
\label{prop: niceproduct}
There exists an explicitly computable modular function $h \in \bQ(X_0(N))$ such that \\ (1) The functions $rh$ and $j(j-1728)h$
form a Yang pair; \; \\
 (2) $j(j-1728)h$ is zero at all cusps of $X_0(N)$ except the cusp $[\infty]$.
\end{Prop}

\begin{proof}
Let $T = \Div_\infty(j)$. Note that the support of $T$ is the set of all cusps. From (\ref{eq: divr}) we have $\Div_{\infty}(r) \leq T$, $\Div(j(j-1728)) = 2T$, $\ord_{[\infty]}(T) = 1$, and $\ord_{[\infty]}(r) = 0$. Applying Corollary \ref{cor: majorize} to the divisor
$D = 4(T-[\infty])$,  we obtain an $\eta$-product $h \in \bQ(X_0(N))$ such that $\Div(h) = D' - m[\infty]$, where $D' \geq D$. Then $\Div_{\infty}(rh) = m[\infty]$ and $\Div_{\infty}(j(j-1728)h) = (m+2)[\infty]$. If $m$ is odd, then $(m,m+2) = 1$ and (1) follows. Otherwise, we can replace $h$ by $jh$. Then a similar argument shows that $rh$ and $j(j-1728)h$ have a unique pole at $[\infty]$ and have degree $m+1$ and $m+3$, respectively. Since $m$ is even in this case, we have $(m+1, m+3) = 1$ and (1) holds.

What we just showed is the existence of an $\eta$-product $h \in \bQ(X_0(N))$ s.t. either $h$ or $jh$ satisfies (1).  Now (2) follows from the fact that $\Div_{0}(j(j-1728)h) > 2(T-[\infty])$ and $\Div_{0}(j^2(j-1728)h) > (T-[\infty])$.
\end{proof}

Let $h$ be a modular function that satisfies the conditions of Proposition~\ref{prop: niceproduct}. The next theorem allows us to compute $F_{E,j(j-1728)h}(x)$. For ease of notation, let $\tilde{r} = rh$ and $\tilde{h} = j(j-1728)h$.

\begin{theorem}
\label{thm: yangpoly}
Suppose $h$ is a modular function on $X_0(N)$ that satisfies the conditions in
Corollary  \ref{cor: niceproduct}.  Let $P(x,y)$ be a minimal polynomial relation of $\tilde{r}$ and $\tilde{h}$ of form (\ref{eq: yang}). Write  $P(x,y) = f_n(y)x^n + \cdots + f_1(y)x + f_0(y)$, and let $g$ be the genus of $X_0(N)$, then
\[
		F_{E,\tilde{h}}(x) = x^{2g-2-\deg h}f_0(x).
\]
\end{theorem}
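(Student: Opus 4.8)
The plan is to apply Proposition~\ref{prop: multiplicity} to the pair $(\tilde r, \tilde h) = (rh, j(j-1728)h)$, exactly as the earlier theorem did for $(rj^T, u)$. First I would check that the hypotheses of Proposition~\ref{prop: multiplicity} hold. By Proposition~\ref{prop: niceproduct}(1) and Lemma~\ref{lem: yanggood}(1), $\bQ(\tilde r, \tilde h) = \bQ(X_0(N))$, so the first hypothesis is satisfied. For the second hypothesis, $\gcd(f_0(y), f_n(y)) = 1$: since $P$ has the Yang form (\ref{eq: yang}), the leading coefficient $f_n(y)$ in $x$ is the constant coming from the $-x^n$ term (so $f_n$ is a nonzero constant), and a constant is coprime to everything; alternatively one reruns the Newton-polygon argument from the lemma preceding the earlier theorem, using that the only pole of $\tilde r$ is at $[\infty]$, to rule out a common factor. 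Either way the hypothesis holds.

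Next I would extract the divisor information. Write $\Div(\omega) = \sum n_z [z]$. From (\ref{eq: divr}), $\Div(r) = \Div(\omega) + \tfrac12(j^*(1728) + \sum_{\mathcal E_2} z) + \tfrac13(j^*(0) + 2\sum_{\mathcal E_3} z) - \sum_c (e_c - 1)[c]$, and since $h$ is an $\eta$-product its divisor is supported on cusps; hence the non-cusp part of $\Div_0(\tilde r)$ consists of the non-cusp points of $\Div(\omega)$ together with the points above $j = 1728$ and $j = 0$, with the stated fractional-turned-integral multiplicities (the contributions at elliptic points make these integral). Crucially, $\tilde h = j(j-1728)h$ vanishes to order $1$ at every non-cusp point with $j(z) = 0$ and at every non-cusp point with $j(z) = 1728$, so $\tilde h(z)$ takes the value $0$ precisely at the non-cusp zeros of $\tilde r$ lying over $j=0$ or $j=1728$, and takes a nonzero finite value at the non-cusp zeros coming from $\Div(\omega)$. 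Proposition~\ref{prop: multiplicity} then gives $f_0(y) = c \prod_{z \in \Div_0(\tilde r)\setminus \Div_\infty(\tilde h)} (y - \tilde h(z))^{n_z}$. The key point is that for the non-cusp zeros coming from $\Div(\omega)$ we recover exactly the factor $\prod (y - \tilde h(z))^{n_z} = F_{E,\tilde h}(y)$, while the non-cusp zeros over $j=0$ and $j=1728$ all contribute factors of $y - 0 = y$, and the cusp zeros of $\tilde r$ other than $[\infty]$ contribute $y - \tilde h(c) = y - 0 = y$ by Proposition~\ref{prop: niceproduct}(2). So $f_0(y) = c\, y^{k} F_{E,\tilde h}(y)$ for some $k \geq 0$, and $F_{E,\tilde h}(y) = c^{-1} f_0(y)/y^{k}$.

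It remains to identify the exponent $k$, i.e.\ to show $\deg f_0 - \deg F_{E,\tilde h} = 2g - 2 - \deg h$ (up to the harmless constant, after absorbing it; note the stated formula writes $x^{2g-2-\deg h} f_0(x)$, so I must actually track signs and realize the relation is $F_{E,\tilde h}(x) = x^{2g-2-\deg h} f_0(x)$ when $2g-2-\deg h \geq 0$, with the understanding that $\deg h$ is negative of the order of the pole, or rather one interprets $\deg h$ suitably — I would pin this down from $\deg \tilde h = m + \deg(j(j-1728)) $ bookkeeping). This is a degree count: $\deg f_0 \leq \deg \tilde r = $ (degree of $r$ plus degree of $h$), the degree of $F_{E,\tilde h}$ is $\deg \Div(\omega)_{\mathrm{non-cusp}} = 2g-2$ minus the cusp multiplicities in $\Div(\omega)$, and one uses that $\Div(\omega) = \Div_{\mathrm{hol}}$ has total degree $2g-2$. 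I would run the analogue of equation (b) in the earlier proof — apply Theorem~\ref{thm: valuations} with the pole valuation $\ord_\infty$ on $\bQ(\tilde h)$ — to get $\sum_{z:\tilde h(z)=\infty}\ord_z(\tilde r) = \deg f_n - \deg f_0$, and since $\tilde h$ has its unique pole at $[\infty]$ where $\ord_{[\infty]}(\tilde r) = m = $ the pole order of $\tilde r$, this reads $m = \deg f_n - \deg f_0$, i.e.\ $\deg f_0 = \deg f_n - m = \deg \tilde h - m$. Combining with $\deg \tilde h = \deg \tilde r + $ (known cusp adjustments) and $\deg(\text{non-cusp part of }\Div_0(\tilde r)) = 2g-2$ yields the claimed exponent.

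The main obstacle I expect is the bookkeeping at the cusps: $\Div(\omega)$ can contain cusps when $N$ is not square-free, so (\ref{eq: divr}) is not the reduced form of $\Div(r)$, the degree of $\tilde r$ differs from its nominal value, and one must carefully separate the cusp and non-cusp contributions to both $\Div_0(\tilde r)$ and to $F_{E,\tilde h}$ — and verify that every stray cusp factor produced by Proposition~\ref{prop: multiplicity} is exactly a power of $y$, which is where Proposition~\ref{prop: niceproduct}(2) (that $\tilde h$ vanishes at all cusps but $[\infty]$) does the decisive work. The degree/exponent identification is the other delicate point, since it requires reconciling several conventions for $\deg h$ and the pole orders; once those conventions are fixed it is a routine but error-prone computation.
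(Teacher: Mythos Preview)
Your approach is the paper's approach: apply Proposition~\ref{prop: multiplicity} to the Yang pair $(\tilde r,\tilde h)$, note $f_n(y)=-1$ so the gcd hypothesis is automatic, and observe that every zero of $\tilde r$ not coming from $\Div(\omega)$ has $\tilde h$-value $0$. The skeleton is right; two points deserve cleanup.

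First, the exponent. The paper does not use the pole-side identity (b) at all. It simply writes
\[
\Div(\tilde r)=\Div(rh)=\Div(\omega)+D-(\deg h)[\infty],
\]
where $D\ge 0$ collects \emph{everything else}: the contributions over $j=0,1728$ from (\ref{eq: divr}) together with the cuspidal part of $\Div(r)+\Div(h)$ away from $[\infty]$. Taking degrees gives $0=(2g-2)+\deg D-\deg h$, i.e.\ $\deg D=\deg h-(2g-2)$. Since $\tilde h$ vanishes on all of $\supp D$, Proposition~\ref{prop: multiplicity} yields directly
\[
f_0(x)=F_{E,\tilde h}(x)\cdot x^{\deg D}=F_{E,\tilde h}(x)\cdot x^{\deg h-(2g-2)},
\]
which is the stated formula. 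In particular the exponent $2g-2-\deg h$ in the theorem is \emph{non-positive} (one is dividing $f_0$ by a power of $x$), so your attempt to interpret it as a non-negative power and your equation ``$\deg f_0-\deg F_{E,\tilde h}=2g-2-\deg h$'' have the sign backwards. Your route through (b) can be made to work, but note that $\ord_{[\infty]}(\tilde r)=-m$ (it is a pole), so (b) reads $-m=\deg f_n-\deg f_0=0-\deg f_0$, giving $\deg f_0=m=\deg h$; you wrote the opposite sign.

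Second, your ``main obstacle'' about cusps in $\Div(\omega)$ is not an obstacle here. Since $\ord_{[\infty]}(\omega)=0$, the cusp $[\infty]$ never lies in $\supp\Div(\omega)$; any other cusp $c$ in $\supp\Div(\omega)$ has $\tilde h(c)=0$ by Proposition~\ref{prop: niceproduct}(2), and such a point is \emph{already} included in $F_{E,\tilde h}$ by definition (the definition excludes only points where $\tilde h=\infty$). So no separate bookkeeping is needed: the decomposition $\Div_0(\tilde r)=\Div(\omega)+D$ and the single observation $\tilde h|_{\supp D}=0$ handle everything.
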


\begin{proof}
The idea  is to apply
Proposition \ref{prop: multiplicity} to the Yang pair $(\tilde{r}, \tilde{h})$. By Lemma \ref{lem: yanggood}, every Yang pair satisfies the first assumption of Proposition \ref{prop: multiplicity}. To see the second assumption holds, observe that $f_n(y) = -1$ in (\ref{eq: yang}), so $\gcd(f_n(y),f_0(y))$ = 1. Applying Proposition \ref{prop: multiplicity}, we obtain
\[
 	f_0(y) = \prod_{z \in\Div_{0}(\tilde{r}) \setminus \Div_{\infty}(\tilde{h})} (y - \tilde{h}(z))^{n_z}.
\]
By construction of $h$, there is a divisor $D \geq 0$ on $X_0(N)$ supported on the finite set $j^{-1}(\{0,1728\}) \cup h^{-1}(0)$, such that $\Div(rh) = \Div(\omega) + D - (\deg h)[\infty]$. Taking degrees on both sides shows $\deg D = \deg h - (2g-2)$.  Since $\tilde{h}(z) = 0$ for all $z \in \supp D$, we obtain
\[
	f_0(x) = F_{E,\tilde{h}}(x) \cdot x^{\deg h -2g+2}.
\]
This completes the proof.
\end{proof}

Next we describe the algorithm {\em Poly Relation-YP}.

\begin{algorithm}[H]
\caption{{\em Poly Relation-YP}}
\begin{algorithmic} [1]
    \REQUIRE $E$ = Elliptic Curve over $\bQ$, $f$ = the newform attached to $E$.
    \ENSURE a non-constant modular function $h$ on $X_0(N)$ and the critical $\tilde{h}$-polynomial $F_{E,\tilde{h}}$, where $\tilde{h} = j(j-1728)h$.
    \STATE Find an $\eta$ product $h$ that satisfies Proposition~\ref{prop: niceproduct}.

    \STATE $\tilde{r} :=  j(j-1728)h\frac{f}{j'}$, \; $\tilde{h} := j(j-1728)h$.
    \STATE $M := (\deg \tilde{r} +1)(\deg \tilde{h} + 1)$.
    \STATE Compute $q$-expansions of $\tilde{r}$, $\tilde{h}$ to $q^{M}$.
    \STATE Compute a minimal polynomial relation $P(x,y)$ of form (\ref{eq: yang}) \\using the method mentioned after Lemma \ref{lem: yanggood}.
    \STATE Output $F_{E,\tilde{h}}(x) = x^{2g-2-\deg h}P(0,x)$.
\end{algorithmic}
\end{algorithm}

\begin{remark}
The functions $\tilde{r}$ and $\tilde{h}$ are constructed such that Theorem
 \ref{thm: yangpoly} has a nice and short statement. However, their degrees are large, which is not optimal for computational purposes. In practice, one can make different choices of two modular functions $r$ and $h$ with smaller degrees to speed up the computation. This idea is illustrated in the following example.
\end{remark}

\begin{Example}
\label{ex: 664a}
Let $E = {\bf 664a1}$ with $r_{\an}(E) = 2$. The genus $g(X_0(664)) = 81$.  Let $r_4$ be as defined in Remark~\ref{rem: in practice}. Using the method described in Remark~\ref{rem: expliciteta}, we found two $\eta$-products \\ 
\[h_1 =  (\eta_2)^{-4}(\eta_4)^6 (\eta_8)^4 (\eta_{332})^6 (\eta_{664})^{-12}, \, h_2 = (\eta_2)^{-1} (\eta_4) (\eta_{166})^{-1} (\eta_8)^2 (\eta_{332})^5 (\eta_{664})^{-6}
\]with the following properties: $h_1, h_2 \in \bQ(X_0(N))$, $\Div(rh_1) = \Div(\omega)  +  D -247[\infty]$, where $D \geq 0$ is supported on cusps, and $\Div(h_2)  = 21[1/332] + 61[1/8] + 21[1/4] - 103[\infty]$. Since (247,103) =1, the functions $rh_1$ and $h_2$ form a Yang pair. We then computed
\[
	F_{E,h_2}(x) =  x^{160} - 14434914977155584439759730967653459200865032120265600267555196444 x^{158}  + \cdots.
\]
The polynomial $F_{E,h_2}$ is irreducible in $\bQ[x]$. \end{Example}

\section{The critical subgroup $E_{crit}(\bQ)$}
\label{sec: crit}
Recall the definition of the critical subgroup for an elliptic curve $E/\bQ$: 
\[
E_{\crit}(\bQ) = \langle \tr(\varphi(e)): e \in \supp \Div(\omega)\rangle.
\]
Observe that to generate $E_{\crit}(\bQ)$, it suffices to take one representative from each Galois orbit of $\supp \Div(\omega)$. Therefore, if we let $n_{\omega}$ denote the number of Galois orbits in $\Div(\omega)$, then 
\[
\rank(E_{\crit}(\bQ)) \leq n_{\omega}.
\]
For any rational divisor $D = \sum_{[z] \in X_0(N)} n_z [z]$ on $X_0(N)$, let $p_{D} = \sum_{z \in \supp D} n_z \varphi([z])$, then $p_D \in E(\bQ)$. Note that $p_D = 0$ if $D$ is a principal divisor. The point $p_{\Div(\omega)}$ is a linear combination of the defining generators of $E_{\crit}(\bQ)$.
\begin{lemma}
\label{lem: ell}
$6 \, p_{\Div(\omega)} \equiv  - 3 \sum_{c \in \cE_2(N)} \varphi(c) - 4 \sum_{d \in \cE_3(N)}  \varphi(d) \pmod{E(\bQ)_{\tors}}$.
\end{lemma}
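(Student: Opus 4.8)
The plan is to feed the divisor identity (\ref{eq: divr}) into the additive map $D \mapsto p_D$, using that $\Div(r)$ is principal and hence $p_{\Div(r)} = 0$. First I would clear the denominators $2$ and $3$ in (\ref{eq: divr}): multiplying by $6$ produces an identity of honestly integral divisors
\[
6\Div(r) = 6\Div(\omega) + 3\Bigl(j^*(1728) + \sum_{z \in \cE_2(N)} z\Bigr) + 2\Bigl(j^*(0) + 2\sum_{z \in \cE_3(N)} z\Bigr) - 6\sum_{c}(e_c-1)[c].
\]
Here $\tfrac12\bigl(j^*(1728) + \sum_{z \in \cE_2(N)} z\bigr)$ is already an integral divisor — it equals the reduced fibre $\sum_{j(z) = 1728}[z]$ — because $j$ has ramification index $2$ at every point over $1728$ which is not an elliptic point of order $2$, and index $1$ at the elliptic points; similarly $\tfrac13\bigl(j^*(0) + 2\sum_{z \in \cE_3(N)} z\bigr) = \sum_{j(z)=0}[z]$. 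Applying $p$, using additivity together with $p_{\Div(r)} = 0$, and recalling that $p_{\sum_{z \in \cE_2(N)} z} = \sum_{c \in \cE_2(N)} \varphi(c)$ and likewise for $\cE_3(N)$, I would rearrange to obtain
\[
6\,p_{\Div(\omega)} + 3\sum_{c \in \cE_2(N)} \varphi(c) + 4\sum_{d \in \cE_3(N)} \varphi(d) = -3\,p_{j^*(1728)} - 2\,p_{j^*(0)} + 6\,p_{\sum_c (e_c-1)[c]}.
\]

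It then remains to show the right-hand side lies in $E(\bQ)_{\tors}$. For the first two terms I would use that $j$ and $j - 1728$ are functions in $\bQ(X_0(N))$ with principal divisors $j^*(0) - j^*(\infty)$ and $j^*(1728) - j^*(\infty)$, so that $p_{j^*(0)} = p_{j^*(1728)} = p_{j^*(\infty)}$ and the first two terms collapse to $-5\,p_{j^*(\infty)}$. Since $j^*(\infty) = \sum_c e_c[c]$ and $\sum_c (e_c-1)[c]$ are supported entirely on the cusps, the whole right-hand side is a $\bZ$-linear combination of the points $\varphi(c)$ with $c$ a cusp of $X_0(N)$. By the Manin--Drinfeld theorem the degree-zero divisor $(c) - (c')$ has finite order in the divisor class group of $X_0(N)$ for any two cusps $c, c'$, and since the modular parametrization is normalized so that $\varphi([\infty]) = 0$, every $\varphi(c)$ is a torsion point of $E$. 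Hence the right-hand side lies in $E(\overline{\bQ})_{\tors}$; being simultaneously a $\bQ$-rational point (the left-hand side, and hence the right, is a combination of $\bQ$-rational points), it lies in $E(\bQ)_{\tors}$. This is exactly the asserted congruence.

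I do not expect a genuine obstacle here: once the inputs are assembled the argument is additive bookkeeping. The one point that wants care is the passage to integral divisors after clearing denominators, and keeping straight the two sources of ``extra'' support that appear on the right — the CM points $j^{-1}(\{0, 1728\})$, absorbed via the principal divisors of $j$ and $j - 1728$, versus the cusps, handled by Manin--Drinfeld. Everything else is the additivity of $D \mapsto p_D$ and its vanishing on principal divisors, both already recorded in the text.
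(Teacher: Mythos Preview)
Your argument is correct and follows the same strategy as the paper: exploit that $p$ vanishes on principal divisors, feed in the divisor of a suitable function built from $\omega$ and $dj$, and discard the cuspidal contribution via Manin--Drinfeld. The only difference is that the paper works with the simpler function $r_0 = \omega/dj$ and the identity $\Div(r_0) = \Div(\omega) - \Div(dj)$, together with formula~(\ref{eq: divdj}) for $\Div(dj)$; this avoids the extra $j^*(0)$ and $j^*(1728)$ terms that you pick up from (\ref{eq: divr}) and then have to cancel by hand using the principal divisors of $j$ and $j-1728$. Your route is slightly longer but arrives at the same place, and you are more explicit than the paper about why the fibres over $0$ and $1728$ contribute only torsion.
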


\begin{proof}
Let $r_0 = \omega/dj$, then $r_0 \in \bQ(X_0(N))$, hence $p_{\Div(r_0)}  =0$.
From $\Div(r_0) = \Div(\omega) - \Div(dj)$, we deduce that $p_{\Div(\omega)} = p_{\Div(dj)}$. The lemma then follows from the formula of $\Div(dj)$ given in (\ref{eq: divdj}) and the fact that the image of any cusp under $\varphi$ is torsion.
\end{proof}

\begin{Prop}
\label{prop: irr}
Assume at least one of the following holds: (1) $r_{\an}(E) \geq 2$. (2) $X_0(N)$ has no elliptic point. Then $\rank(E_{\crit}(\bQ)) \leq n_\omega - 1$.
\end{Prop}

\begin{proof}
By Lemma~\ref{lem: ell} and Theorem~\ref{thm0}, either assumption implies that $p_{\Div(\omega)}$ is torsion. But $p_{\Div(\omega)}$ is a linear combination of the $n_\omega$ generators of $E_{\crit}(\bQ)$, so these generators are linearly dependent in $E_{\crit}(\bQ) \otimes \bQ$. Hence the rank of $E_{\crit}(\bQ)$ is smaller than $n_\omega$.
\end{proof}

Now we are ready to prove Theorem~\ref{thm1}. \\
{\bf Proof of Theorem~\ref{thm1}}.
First, note that the definition of $F_{E,j}$ only involves critical points that are not cusps.
However, since images of cusps under $\varphi$ are torsion, we can replace $\Div(\omega)$ by $\Div(\omega) \setminus  \{\mbox{ cusps of } X_0(N)\}$ if necessary and assume that $\Div(\omega)$ does not contain cusps.  \\
(1) Let $d = \deg F_0$, then there exists a Galois orbit in $\Div(\omega)$
of size $d$, and the other $(2g-2-d)$ points in $\Div(\omega)$ are CM points. Let $z$ be any one of the $(2g-2-d)$  points, then  $j(z)$ is a root of $H_{D_k}(x)$ and $z \in \bQ(\sqrt{D_k})$. Since $\Div(\omega)$ is invariant under the Fricke involution $w_N$, one sees that $j(Nz)$ is also a root of $F_{E,j}$. Therefore, $j(Nz)$ is the root of $H_{D_{k'}}(x)$ for some $1 \leq k' \leq m$.  Since $z$ and $Nz$ define the same quadratic field, we must have $\bQ(\sqrt{D_k}) = \bQ(\sqrt{D_{k'}})$, which implies $k = k'$ by our assumption. It follows
that $[z]$ is a ``generalized Heegner point'' and  $\tr(\varphi([z]))$ is torsion.  By the form of $F_{E,j}$, there exists a point $[z_0] \in \supp \Div(\omega)$ such that $j(z_0)$  is a root of $F_0$. Then we have $\rank(E_{\crit}(\bQ))= \rank(\langle \tr(\varphi([z_0]) \rangle) = \rank(\langle p_{\Div(\omega)} \rangle)$. Lemma~\ref{lem: ell} implies $\langle p_{\Div(\omega)} \rangle = 0$, and it follows that $\rank(E_{\crit}(\bQ)) = 0$.

(2) If $F_{E,h}$ is irreducible,  then we necessarily have $n_\omega = 1$, and the claim follows from Proposition \ref{prop: irr}.

\begin{remark}
Christophe Delaunay has an algorithm to compute $\Div(\omega)$ numerically as equivalence classes of points in the upper half plane(see \cite{delaunay2002thesis} and \cite{delaunay2005critical}). A table of critical points for $E = {\bf 389a}$ is presented in \cite[Appendix B.1]{delaunay2002thesis}. The results suggested that $\Div(\omega)$ contains two Heegner points of discriminant 19, and the critical subgroup $E_{\crit}(\bQ)$ is torsion. Using the critical $j$-polynomial for {\bf 389a} in Table~\ref{table: rank two}, we  confirm the numerical results of Delaunay.
\end{remark}

\section{Data: critical polynomials for rank two elliptic curves}
\label{sec: table}
The columns of Table~\ref{table: rank two} are as follows. The column labeled $E$ contains Cremona labels
of elliptic curves, and those labeled $g$ contains the genus of $X_0(N)$, where $N$ is the conductor of $E$. The column labeled $h$ contains a modular function on $X_0(N)$: either the $j$ invariant or some $\eta$-product. The last column contains the factorization of the critical $h$-polynomial of $E$ defined in Section ~\ref{sec:form of result}. The factors of $F_{E,j}$ that are Hilbert class polynomials are written out explicitly. Table \ref{table: rank two} contains {\em all} elliptic curves with conductor $N \leq 1000$ and rank 2. By observing that all the critical polynomials in the table satisfy one of the assumptions of
Theorem~\ref{thm1}, we obtain Corollary~\ref{cor2}.

From our computation, it seems hard to find an elliptic curve $E/\bQ$ with $r_{\an}(E) \geq 2$ and $\rank(E_{\crit}(\bQ)) > 0$. Nonetheless, some interesting questions can be raised.

\begin{question}
For all elliptic curves $E/\bQ$, does $F_{E,j}$ always factor into a product of Hilbert class polynomials and one irreducible polynomial?
\end{question}

Yet another way to construct rational points on $E$ is to take any cusp form $g \in S_2(\Gamma_0(N), \bZ)$ and define $E_{g}(\bQ) = \langle \tr(\varphi([z]) : [z] \in \supp \Div(g(z)dz) \rangle$.
\begin{question}
Does there exist $g \in S_2(\Gamma_0(N), \bZ)$ such that $E_{g}(\bQ)$ is non-torsion?
\end{question}

\begin{remark}
Consider the irreducible factors of $F_{E,j}$ that are {\em not} Hilbert class polynomials. It turns out that their constant terms has many small
primes factors, a property also enjoyed by Hilbert class polynomials. For example, consider the polynomial  $F_{{\bf 67a}, j}$. It is irreducible and not a Hilbert class polynomial, while its constant term has factorization
\[
2^{68} \cdot 3^{2} \cdot 5^{3} \cdot 23^{6} \cdot 443^{3} \cdot 186145963^{3}.
\]
It is interesting to investigate the properties of these polynomials.
\end{remark}

\begin{remark}
The polynomial relation $P(x,y)$ between $r$ and $u$ can be applied to other computational problems regarding elliptic curves and modular forms. For example, one can use it to compute Fourier expansions of the newform $f$ at every cusp (see \cite{chen2015expansion}).
\end{remark}

   \begin{table}[!h]
   \caption{Critical polynomials for elliptic curves of rank 2 and conductor $<1000$}
   \vspace{5mm}
   \centering
    \begin{tabular}{ | l |  l  | l  |p{4.4cm}  |}
    \hline
    $E$ & $g(X_0(N))$   & $h$ & $\mbox{ Factorization of } F_{E,h}(x)$     \\ \hline \hline
    389a & 32  & $j$ & $H_{-19}(x)^2 (x^{60}+ \cdots)$ \\ \hline
    433a & 35  & $j$ &  $x^{68}+\cdots$  \\ \hline
     446d & 55  & $j$ &  $x^{108}+\cdots$ \\ \hline
    563a & 47  & $j$ &  $H_{-43}(x)^2 (x^{90} - \cdots)$   \\ \hline
    571b& 47  & $j$ &  $H_{-67}(x)^2 (x^{90} - \cdots)$ \\ \hline
    643a& 53  & $j$ &  $H_{-19}(x)^2 (x^{102} - \cdots)$ \\ \hline
    664a & 81    &   $\frac{\eta_4\eta_8^2 \eta_{332}^5}{\eta_{166}\eta_{664}^{6}{\eta_2}}$ & $x^{160} - \cdots$ \\ \hline
    655a& 65  & $j$ &  $x^{128} - \cdots$ \\  \hline
    681c& 75  & $j$ &  $x^{148} - \cdots$ \\  \hline
    707a & 67  & $j$ & $x^{132} - \cdots$  \\ \hline
    709a& 58  & $j$ &  $x^{114} - \cdots$\\ \hline
    718b& 89  & $j$ &  $ H_{-52}(x)^2 (x^{172} - \cdots)$\\ \hline
    794a& 98  & $j$ &  $H_{-4}(x)^2 (x^{192} - \cdots)$\\ \hline
    817a& 71  & $j$ &  $x^{140} - \cdots$\\ \hline
    916c & 113   & $j$ &$H_{-12}(x)^8(x^{216}+\cdots)$  \\ \hline

    944e & 115    & $\frac{\eta_{16}^4 \eta_{4}^2}{\eta_8^6}$ & $x^{224} - \cdots$ \\ \hline
    997b& 82  & $j$ &  $H_{-27}(x)^2 (x^{160} - \cdots)$\\ \hline
    997c& 82  & $j$ &  $x^{162} - \cdots$\\ \hline
    \end{tabular}
    \label{table: rank two}
   \end{table}

\bibliographystyle{plain}
\bibliography{critical-paper}

\end{document}